\newcommand{\Q}{\mathbb{Q}}
\newcommand{\R}{\mathbb{R}}
\renewcommand{\a}{\alpha}
\renewcommand{\b}{\beta}
\newcommand{\e}{\varepsilon}
\newcommand{\g}{\gamma}
\newcommand{\ie}{{\rm i.e.\ }} 
\renewcommand{\=}{:=}
\DeclareMathOperator{\MA}{MA}
\DeclareMathOperator{\Amp}{Amp}
\DeclareMathOperator{\vol}{vol}
\numberwithin{equation}{section}       % Number formulas within sections
\newtheorem{prop} {Proposition} [section]
\newtheorem{thm}[prop] {Theorem} 
\newtheorem{lem}[prop] {Lemma}
\newtheorem{prop-def}[prop]{Proposition-Definition}
\newtheorem*{corA}{Corollary A}
\theoremstyle{remark}
\newtheorem*{ackn}{Acknowledgment}
\newtheorem{theorem}{Theorem}[section]
\newtheorem{lemma}[theorem]{Lemma}
\theoremstyle{definition}
\newtheorem{definition}[theorem]{Definition}
\newtheorem{remark}[theorem]{Remark}
\newtheorem{conjecture}[theorem]{Conjecture}
\newenvironment{customthm}[1]
  {\innercustomthm}
  {\endinnercustomthm}
\begin{document} 

\title{Duality between the pseudoeffective and the movable cone on a projective manifold}
\author{David Witt Nystr\"om \\ \\ { \it with an appendix by S\'ebastien Boucksom}}
%\date{12 January 2014}
\maketitle

\begin{abstract}
We prove a conjecture of Boucksom-Demailly-P\u aun-Peternell, namely that on a projective manifold $X$ the cone of pseudoeffective classes in $H^{1,1}_{\mathbb{R}}(X)$ is dual to the cone of movable classes in $H^{n-1,n-1}_{\mathbb{R}}(X)$ via the Poincar\'e pairing. This is done by establishing a conjectured transcendental Morse inequality for the volume of the difference of two nef classes on a projective manifold. As a corollary the movable cone is seen to be equal to the closure of the cone of balanced metrics. In an appendix by Boucksom it is shown that the Morse inequality also implies that the volume function is differentiable on the big cone, and one also gets a characterization of the prime divisors in the non-K\"ahler locus of a big class via intersection numbers.  
\end{abstract}

\section{Introduction}

In \cite{BDPP} Boucksom-Demailly-P\u aun-Peternell proved that a line bundle on a projective manifold is pseudoeffective iff its degree along any member of a covering family of curves is non-negative. As explained in \cite{BDPP} and \cite{Dem07} this result should be understood in terms of duality of cones. 

Let $X$ be a compact K\"ahler manifold. There are four important cones in $H^{1,1}_{\mathbb{R}}(X)$ that relates to different notions of positivity. The K\"ahler cone $\mathcal{K}$ is the open convex cone given by the set of K\"ahler classes, i.e. the classes that contain a K\"ahler form. The nef cone $\overline{\mathcal{K}}$ is simply the closure of $\mathcal{K}$; a class in $\overline{\mathcal{K}}$ is called nef. The pseudoeffective cone $\mathcal{E}$ is the closed convex cone given by the set of pseudoeffective classes, i.e. the classes that contain a closed positive current. The interior of $\mathcal{E}$ is called the big cone; a class in $\mathcal{E}^{\circ}$ is called big. We have natural inclusions $\mathcal{K}\subset \mathcal{E}^{\circ}$ and $\overline{\mathcal{K}}\subset \mathcal{E}$, and in general these inclusions are strict.

These notions of positivity are compatible with the corresponding positivity notions for line bundles or divisors: a line bundle $L$ is ample/nef/pseudoeffective/big iff the class $c_1(L)$ is K\"ahler/nef/pseudoeffective/big. 

In $H^{n-1,n-1}_{\mathbb{R}}(X)$ there are two important cones. The first cone $\mathcal{N}$, also called the pseudoeffective cone, consists of all classes that contain a closed positive $(n-1,n-1)$-current. The other one is called the the movable cone $\mathcal{M}$ and is defined as the closed convex cone generated by classes of the form $\mu_*(\tilde{\beta}_1\wedge ...\wedge \tilde{\beta}_{n-1})$ where $\mu: \tilde{X}\to X$ is some smooth modification and $\tilde{\beta}_i$ are K\"ahler classes on $\tilde{X}$. The cohomology class associated to a curve in $X$ will lie in $\mathcal{M}$ iff it moves in an analytic family which covers $X$ (see \cite{BDPP}); such a curve is called movable. 

Note that there is a natural pairing (sometimes called the Poincar\'e pairing) between $H^{1,1}(X,\mathbb{R})$ and $H^{n-1,n-1}(X,\mathbb{R})$ given by $(\alpha \cdot \eta):=\int_X \alpha\wedge \eta.$

The following fundamental result was proved by Demailly-P\u aun in \cite{DP}:

\begin{thm}
The nef cone $\overline{\mathcal{K}}$ and pseudoeffective cone $\mathcal{N}$ are dual (with respect to the Poincar\'e pairing). More concretely, a class $\alpha\in H^{1,1}(X,\mathbb{R})$ is nef iff $(\alpha \cdot \eta)\geq 0$ for all $\eta\in \mathcal{N}$.
\end{thm}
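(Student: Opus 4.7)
The plan is to prove the two directions separately. For the easy direction---that any nef $\alpha$ pairs non-negatively with every $\eta\in\mathcal{N}$---I would argue by approximation: fix a K\"ahler class $\omega$ and note that $\alpha+\varepsilon\omega$ is K\"ahler for every $\varepsilon>0$, so choose a K\"ahler form $\omega_\varepsilon\in\alpha+\varepsilon\omega$. For a positive closed $(n-1,n-1)$-current $T$ representing $\eta$, the measure $\omega_\varepsilon\wedge T$ is non-negative, hence its total mass---which equals the cohomological pairing $(\alpha+\varepsilon\omega)\cdot\eta$---is $\geq 0$. Letting $\varepsilon\to 0$ yields $(\alpha\cdot\eta)\geq 0$.

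The converse is the substantive direction, and I would argue by contrapositive: assume $\alpha$ is not nef and produce $\eta\in\mathcal{N}$ with $(\alpha\cdot\eta)<0$. Fixing a K\"ahler class $\omega$, set
\[
t_0=\inf\{\,t>0:\alpha+t\omega\in\overline{\mathcal{K}}\,\}.
\]
Since $\alpha\notin\overline{\mathcal{K}}$ we have $t_0>0$, and $\beta:=\alpha+t_0\omega$ lies on $\partial\overline{\mathcal{K}}$, so $\beta$ is nef but not K\"ahler. At this point I would invoke Demailly-P\u aun's numerical criterion for the K\"ahler cone: a nef class is K\"ahler if and only if $\int_Y\beta^p>0$ for every irreducible analytic subvariety $Y$ of dimension $p$. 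Applied to $\beta$, this furnishes some irreducible subvariety $Y\subset X$ of dimension $p\leq n$ with $\int_Y\beta^p=0$.

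The technical core of the argument is then a mass concentration procedure: I would solve a sequence of degenerate complex Monge-Amp\`ere equations on $X$ whose right-hand sides become increasingly singular along $Y$, and extract a weak limit that is a non-zero positive closed $(n-1,n-1)$-current $T$ whose cohomology class $\eta\in\mathcal{N}$ satisfies $(\beta\cdot\eta)=0$. Once such a $T$ is produced, one concludes
\[
(\alpha\cdot\eta)=(\beta\cdot\eta)-t_0(\omega\cdot\eta)=-t_0(\omega\cdot\eta)<0,
\]
since any K\"ahler class pairs strictly positively with a non-zero class in $\mathcal{N}$. The main obstacle is precisely this mass concentration step---the pluripotential-theoretic heart of Demailly-P\u aun---which requires Yau's solution of the Calabi conjecture together with Ko\l odziej-type a priori estimates in order to control both the cohomology class and the support of the singular part of the limiting current.
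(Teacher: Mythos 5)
The paper does not actually prove this statement: it is quoted as a known background result and attributed to Demailly--P\u aun \cite{DP}, so there is no in-paper argument to compare yours against. What you have written is, in effect, an outline of the proof from \cite{DP} itself. Your easy direction is correct and complete: wedging a K\"ahler form $\omega_\e\in\a+\e\omega$ with a positive closed $(n-1,n-1)$-current gives a positive measure whose total mass is the cohomological pairing, and letting $\e\to0$ gives $(\a\cdot\eta)\ge0$. The architecture of your converse is also the right one: pass to the boundary class $\b=\a+t_0\omega$, invoke the numerical characterization of the K\"ahler cone to find $Y$ with $\int_Y\b^p=0$, produce a nonzero $\eta\in\mathcal{N}$ with $(\b\cdot\eta)=0$, and conclude $(\a\cdot\eta)=-t_0(\omega\cdot\eta)<0$.

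As a standalone proof, however, there is a genuine gap, and it is exactly where you say the "technical core" lies: the mass-concentration step that manufactures the current $T$ is asserted, not executed, and that step (together with the numerical criterion you invoke, which is itself proved by the same mass-concentration scheme) constitutes essentially the entire content of \cite{DP}. So the proposal reduces the theorem to the theorem. Two smaller points should also be addressed in any write-up: (a) the subvariety furnished by the criterion may be $Y=X$ itself (i.e.\ $\int_X\b^n=0$), in which case one does not concentrate mass along a proper subvariety but instead takes $\eta$ to be a weak limit of the classes $(\b+\e\omega)^{n-1}$, which have bounded mass since $\b$ is nef; and (b) one must verify that the limiting current is nonzero, since the final inequality uses $(\omega\cdot\eta)>0$, which holds for a positive $(n-1,n-1)$-current precisely because its trace measure $\omega\wedge T$ has positive total mass when $T\neq0$. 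The naive candidate $[Y]\wedge\b^{p-1}$ can degenerate to zero, which is why \cite{DP} runs the Monge--Amp\`ere concentration argument rather than wedging directly.
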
 

When $X$ is projective we let $\mathcal{E}_{NS}:=\mathcal{E}\cap NS_{\mathbb{R}}(X)$ where $$NS_{\mathbb{R}}(X):=(H^{1,1}_{\mathbb{R}}(X)\cap H^2(X,\mathbb{Z})/\textrm{tors})\otimes_{\mathbb{Z}}\mathbb{R},$$ and similarly $\mathcal{M}_{NS}:=\mathcal{M}\cap N_1(X)$, where $$N_1(X):=(H^{n-1,n-1}_{\mathbb{R}}(X)\cap H^{2n-2}(X,\mathbb{Z})/\textrm{tors})\otimes_{\mathbb{Z}}\mathbb{R}.$$ One can now formulate the result of Boucksom-Demailly-P\u aun-Peternell in \cite{BDPP} in the following way:

\begin{thm} \label{BDPPthm}
On a projective manifold $X$ the cones $\mathcal{E}_{NS}$ and $\mathcal{M}_{NS}$ are dual via the Poincar\'e pairing of $NS_{\mathbb{R}}(X)$ with $N_1(X)$.
\end{thm}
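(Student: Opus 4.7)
The duality amounts to two inclusions. The plan is to prove the easy, formal one via the projection formula, and for the hard one to use the Morse inequality of this paper as the key technical input.

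\emph{Easy direction: $\mathcal{E}_{NS} \subset \mathcal{M}_{NS}^{\vee}$.} For $\alpha \in \mathcal{E}_{NS}$ represented by a closed positive $(1,1)$-current $T$ and a movable generator $\eta = \mu_*(\tilde\beta_1 \wedge \cdots \wedge \tilde\beta_{n-1})$ with $\mu: \tilde X \to X$ a smooth modification and the $\tilde\beta_i$ K\"ahler on $\tilde X$, the projection formula gives $\alpha \cdot \eta = \int_{\tilde X} \mu^* T \wedge \tilde\beta_1 \wedge \cdots \wedge \tilde\beta_{n-1} \geq 0$ by positivity of the integrand. Taking convex hulls and closures extends this to all $\eta \in \mathcal{M}_{NS}$.

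\emph{Hard direction: $\mathcal{M}_{NS}^{\vee} \subset \mathcal{E}_{NS}$.} Argue by contradiction: suppose $\alpha \in NS_{\mathbb{R}}(X) \setminus \mathcal{E}_{NS}$ is non-negative against every movable class. Fix an ample class $A$ and set $t_0 := \inf\{ t \geq 0 : \alpha + tA \in \mathcal{E}_{NS}\}$; since $\mathcal{E}_{NS}$ is closed, $t_0 > 0$. The class $\beta := \alpha + t_0 A$ lies on the boundary of $\mathcal{E}_{NS}$, so $\vol(\beta) = 0$. It suffices to construct $\eta \in \mathcal{M}_{NS}$ with $\eta \cdot \beta = 0$ and $\eta \cdot A > 0$: then $\alpha \cdot \eta = -t_0 (\eta \cdot A) < 0$, contradicting the hypothesis.

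To build $\eta$, for each $\epsilon > 0$ apply Fujita approximation to the big class $\beta_\epsilon := \beta + \epsilon A$, obtaining a smooth modification $\mu_\epsilon: \tilde X_\epsilon \to X$ and a decomposition $\mu_\epsilon^* \beta_\epsilon = a_\epsilon + e_\epsilon$ with $a_\epsilon$ an ample $\mathbb{R}$-class on $\tilde X_\epsilon$ and $e_\epsilon$ effective, such that $a_\epsilon^n$ is arbitrarily close to $\vol(\beta_\epsilon)$. The accompanying orthogonality estimate, $a_\epsilon^{n-1} \cdot e_\epsilon \to 0$, is a direct consequence of the transcendental Morse inequality of this paper. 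Setting $\eta_\epsilon := (\mu_\epsilon)_* a_\epsilon^{n-1} \in \mathcal{M}_{NS}$, the projection formula gives $\eta_\epsilon \cdot \beta_\epsilon = a_\epsilon^n + a_\epsilon^{n-1} \cdot e_\epsilon \to \vol(\beta) = 0$. Writing this as $\eta_\epsilon \cdot \beta_\epsilon = \eta_\epsilon \cdot \beta + \epsilon (\eta_\epsilon \cdot A)$, a sum of non-negative terms, forces both summands to tend to $0$. Normalizing by $\eta_\epsilon \cdot A$ (which can be kept bounded away from $0$ by choosing the Fujita decomposition so that $a_\epsilon$ stays comparable to $\mu_\epsilon^* A$) and extracting a subsequential limit produces the desired $\eta$.

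\emph{Main obstacle.} The entire substance of the argument is concentrated in the Fujita approximation with the quantitative orthogonality bound $a_\epsilon^{n-1} \cdot e_\epsilon \to 0$. This is precisely what the transcendental Morse inequality in the body of the paper is designed to produce. The secondary issue of ensuring a non-trivial normalized limit is handled by a careful choice of parameters in the Fujita construction.
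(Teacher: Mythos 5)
Your overall strategy is the right one and is, in outline, the strategy of \cite{BDPP}: the easy inclusion via the projection formula, and the hard inclusion via Fujita approximation together with the asymptotic orthogonality $a_\varepsilon^{n-1}\cdot e_\varepsilon\to 0$. Be aware, though, that the paper does not prove Theorem \ref{BDPPthm} at all --- it quotes it from \cite{BDPP} and notes that the needed Morse inequality in the $NS$-case is the classical algebraic one \cite[Ex.~2.2.33]{Laz04}; invoking the transcendental inequality of this paper is not circular here, but it is also not necessary, and calling the orthogonality estimate a ``direct consequence'' of it glosses over a real step (the quadratic bound $(a^{n-1}\cdot e)^2\le C\left(\vol-(a^n)\right)$ derived in Appendix \ref{Seb} around (\ref{soonfin})).

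The genuine gap is the normalization at the end. You assert that $\eta_\varepsilon\cdot A=a_\varepsilon^{n-1}\cdot\mu_\varepsilon^*A$ can be kept bounded away from $0$ by making $a_\varepsilon$ ``comparable to $\mu_\varepsilon^*A$.'' That is impossible: $a_\varepsilon^n$ tends to $\vol(\beta)=0$ while $(\mu_\varepsilon^*A)^n=(A^n)>0$, so $a_\varepsilon$ cannot dominate any fixed multiple of $\mu_\varepsilon^*A$, and in fact $\eta_\varepsilon\cdot A\to 0$ is unavoidable in examples (take $\beta$ the class of a $(-1)$-curve on a surface, where the positive part of $\beta+\varepsilon A$ is $\varepsilon\left(A+(A\cdot E)E\right)$). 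After dividing by $\eta_\varepsilon\cdot A$ you therefore face a genuine $0/0$ limit: you must show $\left(a_\varepsilon^n+a_\varepsilon^{n-1}\cdot e_\varepsilon\right)/\left(a_\varepsilon^{n-1}\cdot\mu_\varepsilon^*A\right)\to 0$. This can be repaired, but only by bringing in the Khovanskii--Teissier inequality $a_\varepsilon^{n-1}\cdot\mu_\varepsilon^*A\ge(a_\varepsilon^n)^{1-1/n}(A^n)^{1/n}$ together with a rate on the orthogonality defect, $a_\varepsilon^{n-1}\cdot e_\varepsilon=o\left((a_\varepsilon^n)^{1-1/n}\right)$, which the quadratic bound does supply if for each $\varepsilon$ you take the approximation sufficiently good. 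This is exactly the point at which the proof in Appendix \ref{Seb} also reaches for Khovanskii--Teissier; it sidesteps your compactness/limit extraction entirely by showing that a psef class $\alpha$ in the interior of the dual cone satisfies $\vol(\alpha+\varepsilon\sigma)\ge(\sigma^n)>0$ uniformly in $\varepsilon$, hence is big. Either import that argument or write out the Khovanskii--Teissier step; the ``careful choice of parameters'' does not exist in the form you describe.
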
   

They also formulated a conjecture:

\begin{conjecture} \label{BDPPconj}
On any compact K\"ahler manifold $X$ the cones $\mathcal{E}$ and $\mathcal{M}$ are dual via the Poincar\'e pairing of $H^{1,1}_{\mathbb{R}}(X)$ with $H^{n-1,n-1}_{\mathbb{R}}(X)$.
\end{conjecture}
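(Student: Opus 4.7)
The plan is to deduce the duality from a transcendental Morse inequality for the volume of a difference of two nef classes, together with the differentiability and orthogonality properties of the volume function that follow from it. The easy inclusion, that every class in $\mathcal{E}$ pairs nonnegatively with every class in $\mathcal{M}$, is immediate: each generator $\mu_*(\tilde{\beta}_1 \wedge \cdots \wedge \tilde{\beta}_{n-1})$ of $\mathcal{M}$ is the pushforward of a product of K\"ahler forms, and its pairing with a closed positive $(1,1)$-current representing a class in $\mathcal{E}$ is a nonnegative integral.

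For the reverse inclusion the key analytic ingredient is the following Morse inequality: if $\alpha, \beta$ are nef classes with $\alpha^n - n\,\alpha^{n-1}\cdot\beta > 0$, then $\alpha-\beta$ is big and
\begin{equation*}
\vol(\alpha-\beta) \ \geq\ \alpha^n - n\,\alpha^{n-1}\cdot\beta.
\end{equation*}
I would attempt this by solving a singular Monge--Amp\`ere equation of the form $\MA(\omega_\alpha + dd^c u) = \one_{\{u < \f_\beta\}}\,\omega_\alpha^n$ for a smooth representative $\omega_\alpha$ of $\alpha$ and an $\omega_\beta$-psh function $\f_\beta$ representing $\beta$, then using a Kiselman-type envelope to extract a quasi-psh function in $\alpha - \beta$ whose non-pluripolar Monge--Amp\`ere mass is controlled from below by $\alpha^n - n\,\alpha^{n-1}\cdot\beta$.

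From this inequality, following the Boucksom--Favre--Jonsson framework, one extracts two crucial corollaries: the volume function is $\mathcal{C}^1$ on the big cone with derivative at $\beta$ given by $\gamma \mapsto n\langle\beta^{n-1}\rangle \cdot \gamma$, where the movable intersection class $\langle \beta^{n-1}\rangle$ lies in $\mathcal{M}$; and the orthogonality identity $\vol(\beta) = \beta \cdot \langle \beta^{n-1}\rangle$ holds for every big $\beta$.

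The endgame is then a short ODE argument. Assume $\alpha \cdot \eta \geq 0$ for every $\eta \in \mathcal{M}$; fix a K\"ahler class $\omega$ and set $f(s) = \vol(\alpha + s\omega)$ and $\eta_s = \langle (\alpha + s\omega)^{n-1}\rangle \in \mathcal{M}$. On the range of $s$ where $\alpha + s\omega$ is big, orthogonality together with the hypothesis gives $f(s) = \alpha \cdot \eta_s + s\,\omega \cdot \eta_s \geq (s/n)\,f'(s)$, so $f(s)/s^n$ is non-increasing in $s$. Since $\alpha + s\omega$ is K\"ahler for $s \gg 0$, one has $f(s)/s^n \to \omega^n > 0$, and therefore $f(s) \geq s^n \omega^n > 0$ for every $s > 0$. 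Hence $\alpha + s\omega$ is big for all $s > 0$ and $\alpha$ lies in $\mathcal{E}$. The principal obstacle is the Morse inequality itself: in the projective setting one can exploit algebraic approximation and reduce to line bundle computations, whereas a fully transcendental proof on an arbitrary compact K\"ahler manifold requires delicate regularity and stability estimates for degenerate complex Monge--Amp\`ere equations.
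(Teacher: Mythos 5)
The statement you are proving is stated in the paper as a \emph{conjecture}, and the paper itself only establishes the projective case (Theorem A); the general compact K\"ahler case remains open. Your proposal correctly reproduces the known reduction scheme --- easy inclusion by pulling back a positive current, then Morse inequality $\Rightarrow$ orthogonality $\Rightarrow$ differentiability $\Rightarrow$ duality --- and your ODE endgame (showing $\vol(\alpha+s\omega)/s^n$ is non-increasing and bounded below by $(\omega^n)$) is a valid alternative to the Khovanskii--Teissier argument used in the paper's appendix, granted the orthogonality identity and Theorem \ref{Bthm}. But this reduction is essentially the content of \cite{BDPP}, \cite{BFJ} and Appendix \ref{Seb}; the entire difficulty of the conjecture is concentrated in the one step you leave as a sketch, namely the transcendental Morse inequality $\vol(\alpha-\beta)\geq(\alpha^n)-n(\alpha^{n-1}\cdot\beta)$ for arbitrary nef classes on an arbitrary compact K\"ahler manifold. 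Your proposed route to it --- solving $\MA(\omega_\alpha+dd^cu)=\mathbbm{1}_{\{u<\varphi_\beta\}}\omega_\alpha^n$ and extracting mass concentration --- is essentially the method of \cite{BDPP}, which even in the projective case only yields the weaker constant $\frac{(n+1)^2}{4}$ in place of $n$; no argument is given for why the required stability estimates would close the gap, and as you yourself concede at the end, they are not available.

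The reason the paper can handle the projective case is instructive here: Proposition \ref{propnewest} is proved only for $\beta\in NS_{\mathbb{R}}(X)$, precisely so that after rescaling one may take $\beta=c_1(L)$ for a very ample line bundle $L$ and use a holomorphic section $s$ to build the potential $g=\ln|s|_h^2$ with $dd^cg=[Y]-\omega$. The whole mechanism (the envelopes $\phi_R=\sup\{\psi\leq g_R:\psi\in PSH(X,\theta)\}$, the Berman--Demailly regularity giving $\MA_\theta(\phi_R)=\mathbbm{1}_{D_R}(\theta+dd^cg_R)^n$, and the final bound via $g_R\in PSH(X,\omega)$) hinges on the existence of this global section, for which a transcendental nef class $\beta$ has no substitute. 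Your ``$\omega_\beta$-psh function $\varphi_\beta$ representing $\beta$'' with the needed singular behaviour simply does not exist in general. So the proposal does not prove the conjecture as stated; at best, if you restrict to $X$ projective and supply the envelope construction for $\beta=c_1(L)$, it becomes the paper's proof of Theorem A.
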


More concretely the conjecture says that a class $\alpha\in H_{\mathbb{R}}^{1,1}(X)$ contains a closed positive current iff for all modifications $\mu:\tilde{X}\to X$ of $X$ and K\"ahler classes $\tilde{\beta}_i$ on $\tilde{X}$ we have that 
\begin{equation} \label{numdata}
\int_{\tilde{X}}\mu^*(\alpha)\wedge \tilde{\beta}_1\wedge ...\wedge \tilde{\beta}_{n-1}\geq 0.
\end{equation} 
The if part follows immediately from the fact that one can pull back a closed positive $(1,1)$-current by $\mu$ to get a closed positive $(1,1)$-current on $\tilde{X}$. The (very) hard part is establishing the existence of a closed positive current using the numerical data (\ref{numdata}). 

Our main result confirms the conjecture when $X$ is projective.

\begin{customthm}{A} \label{mainthmA}
When $X$ is projective $\mathcal{E}$ and $\mathcal{M}$ are dual.
\end{customthm}

A Hermitian metric is balanced if given its associated form $\omega$ we have that $\omega^{n-1}$ is closed. Note that $\omega^{n-1}$ is a strongly positive $(n-1,n-1)$-form. Using basic linear algebra one can show that any strongly positive $(n-1,n-1)$-form can be written in a unique way as $\omega^{n-1}$ for some Hermitian metric. The cone of classes of closed strongly positive $(n-1,n-1)$-forms is thus called the balanced cone, denoted by $\mathcal{B}$. It was shown by Toma in \cite{Tom10} for $X$ projective and more generally by Fu-Xiao in \cite{FX14} for $X$  compact K\"ahler that $\mathcal{E}$ and $\overline{\mathcal{B}}$ are dual (for recent work on the balanced cone see e.g. \cite{CRS16} and references therein). As a consequence of our Theorem \ref{mainthmA} we therefore get:

\begin{corA}
When $X$ is projective we have that $\mathcal{M}=\overline{\mathcal{B}}$.
\end{corA}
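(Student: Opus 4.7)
The plan is to derive Corollary A as a formal consequence of Theorem A combined with the Toma--Fu--Xiao duality theorem and the bipolar theorem for closed convex cones. Nothing new needs to be proved here; the only work is bookkeeping about which closed convex cone equals which.

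First I would record that both $\mathcal{M}$ and $\overline{\mathcal{B}}$ are closed convex cones in $H^{n-1,n-1}_{\mathbb{R}}(X)$: for $\mathcal{M}$ this is built into its definition (closed convex cone generated by modified products of K\"ahler classes), and for $\overline{\mathcal{B}}$ it is true by construction as a closure of a convex cone. Then, using the Poincar\'e pairing on $H^{1,1}_{\mathbb{R}}(X) \times H^{n-1,n-1}_{\mathbb{R}}(X)$, write the dual cone
\[
\mathcal{E}^{\vee} := \{\eta \in H^{n-1,n-1}_{\mathbb{R}}(X) : (\alpha \cdot \eta) \geq 0 \text{ for all } \alpha \in \mathcal{E}\}.
\]

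Next I would invoke Theorem~\ref{mainthmA} to get $\mathcal{M} = \mathcal{E}^{\vee}$: the statement that $\mathcal{E}$ and $\mathcal{M}$ are dual means, via the bipolar theorem applied to the closed convex cone $\mathcal{M}$, precisely that $\mathcal{M}$ coincides with the set of classes pairing non-negatively against every pseudoeffective class. Similarly, the theorem of Toma \cite{Tom10} (for $X$ projective) states that $\mathcal{E}$ and $\overline{\mathcal{B}}$ are dual, and hence $\overline{\mathcal{B}} = \mathcal{E}^{\vee}$ by the same bipolar argument. Comparing these two identifications yields $\mathcal{M} = \overline{\mathcal{B}}$.

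There is no real obstacle, since the deep content sits entirely in Theorem~\ref{mainthmA} (and in the Toma--Fu--Xiao result, which is cited as already known). The only thing worth being careful about is that the duality assertions are symmetric: the concrete form of Theorem~\ref{mainthmA} characterizes membership in $\mathcal{E}$ by pairing against $\mathcal{M}$, so the reverse characterization of $\mathcal{M}$ by pairing against $\mathcal{E}$ requires the bipolar theorem, which is applicable precisely because $\mathcal{M}$ is a closed convex cone. With that observation, the corollary reduces to two lines.
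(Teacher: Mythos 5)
Your proposal is correct and matches the paper's own (implicit) argument: the paper derives Corollary A exactly by combining Theorem A with the Toma/Fu--Xiao duality between $\mathcal{E}$ and $\overline{\mathcal{B}}$, so that both $\mathcal{M}$ and $\overline{\mathcal{B}}$ are identified with the dual cone of $\mathcal{E}$. Your explicit remark that the bipolar theorem is needed to pass from $\mathcal{E}=\mathcal{M}^{\vee}$ to $\mathcal{M}=\mathcal{E}^{\vee}$ is a useful clarification of a step the paper leaves tacit, but it is not a different route.
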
 

Thus at least when $X$ is projective these two a priori different notions of positivity corresponding to the cones $\mathcal{M}$ and $\overline{\mathcal{B}}$ are in fact equivalent.

A key notion in the study of $\mathcal{E}$ is that of volume. 

\begin{definition} \label{defvolume}
The volume of a class $\alpha\in \mathcal{E}$, denoted $\vol(\alpha)$, is defined as the supremum all numbers $(\tilde{\beta}^n)$ where $\mu:\tilde{X}\to X$ is a modification and $\tilde{\beta}$ is a K\"ahler class on $\tilde{X}$ such that $\tilde{\beta}\leq \mu^*(\alpha)$ (i.e. $\mu^*(\alpha)-\tilde{\beta}$ is pseudoeffective). When $\alpha$ is not pseudoeffective we define its volume to be zero.
\end{definition}

It was observed already in \cite{BDPP} that to prove Conjecture \ref{BDPPconj} it is enough to establish a certain lower bound on the volume of the difference of two nef classes $\alpha$ and $\beta$, namely 
\begin{equation} \label{volineqintro}
\vol(\alpha-\beta)\geq (\alpha^n)-n(\alpha^{n-1}\cdot \beta).
\end{equation} 
This inequality is known as a transcendental Morse inequality. The case when $\alpha$ and $\beta$ lies in $NS_{\R}(X)$ is well-known and not hard to prove (see e.g. \cite[Ex. 2.2.33]{Laz04}), and it is used in a crucial way in the proof of Theorem \ref{BDPPthm}. Indeed given the transcendental Morse inequality (\ref{volineqintro}) the rest of the proof of Theorem \ref{BDPPthm} in \cite{BDPP} extends to the general case.

In this paper we prove the transcendental Morse inequality when $X$ is projective.

\begin{customthm}{B} \label{mainthmB}
Let $\alpha$ and $\beta$ be two nef classes on a projective manifold $X$. Then the transcendental Morse inequality holds, i.e. $$\vol(\alpha-\beta)\geq (\alpha^n)-n(\alpha^{n-1}\cdot \beta).$$
\end{customthm}

It was recognized in \cite{BDPP} that to prove Conjecture \ref{BDPPconj} in the case when $X$ is projective, it is enough to establish the Morse inequality (\ref{volineqintro}) for pairs of nef classes $\alpha,\beta$ where $\beta\in NS_{\mathbb{R}}(X)$. Recently Boucksom observed that even the weaker estimate $$\vol(\alpha-\beta)\geq (\alpha^n)-\sum_{k=1}^n \binom{n}{k}(\alpha^{n-k}\cdot \beta^k)$$ would be enough in the argument of \cite{BDPP}, hence yielding the stronger Morse inequality as well as Theorem \ref{mainthmA}.

It is thus this key estimate we prove in this paper.

\begin{prop} \label{propnewest}
Let $X$ be projective, $\alpha,\beta\in H^{1,1}(X,\mathbb{R})$ two nef classes where $\beta\in NS_{\mathbb{R}}(X)$. Then we have that $$\vol(\alpha-\beta)\geq (\alpha^n)-\sum_{k=1}^n \binom{n}{k}(\alpha^{n-k}\cdot \beta^k).$$
\end{prop}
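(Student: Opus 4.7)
The starting observation is that the right hand side of the inequality equals $2(\alpha^n)-((\alpha+\beta)^n)$, by the binomial expansion $\sum_{k=0}^n\binom{n}{k}(\alpha^{n-k}\cdot\beta^k)=((\alpha+\beta)^n)$. Since $\alpha$ and $\alpha+\beta$ are nef, their top self-intersections coincide with their volumes, so Proposition \ref{propnewest} is equivalent to the ``secant'' estimate
\[
\vol(\alpha-\beta)+\vol(\alpha+\beta)\geq 2\vol(\alpha).
\]
The inequality is trivial when the right hand side is non-positive, so assume it is positive and in particular that $\alpha-\beta$ is big. By homogeneity, continuity of $\vol$ on the big cone, and continuity of intersection numbers, we may perturb and reduce to the case in which $\alpha$ is K\"ahler and $\beta$ is the class of an ample $\Q$-line bundle $L$; this is the step that uses the projectivity of $X$ and the hypothesis $\beta\in NS_{\R}(X)$.

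The plan is to construct a closed positive current in $\alpha-\beta$ whose non-pluripolar Monge-Amp\`ere mass attains the desired bound. Fix a K\"ahler form $\omega\in\alpha$ and let $\eta\in\beta$ be the smooth curvature of a positive Hermitian metric on $L$. A natural candidate is $T:=\omega-\eta+dd^{c}\phi^{*}$, where
\[
\phi:=\sup\bigl\{v\ :\ v\ \text{is}\ (\omega-\eta)\text{-psh},\ v\leq 0\bigr\}
\]
is the associated envelope and $\phi^{*}$ its u.s.c.\ regularization, so that $T$ is a closed positive current of minimal singularities in $\alpha-\beta$ with $\vol(\alpha-\beta)=\int_X\langle T^n\rangle$ by the theory of non-pluripolar products of Boucksom-Eyssidieux-Guedj-Zeriahi. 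Setting $S:=T+\eta=\omega+dd^{c}\phi^{*}\in\alpha$ and expanding formally
\[
\int_X\langle T^n\rangle=\sum_{k=0}^n\binom{n}{k}(-1)^k\int_X\langle S^{n-k}\rangle\wedge\eta^k,
\]
the comparison theorem for mixed non-pluripolar products yields $\int_X\langle S^{n-k}\rangle\wedge\eta^k\leq(\alpha^{n-k}\cdot\beta^k)$ for $k\geq 1$ (using nefness of $\alpha$ and $\beta$). Applying these upper bounds at odd $k$ and discarding the non-negative contributions at even $k\geq 2$ produces
\[
\int_X\langle T^n\rangle\;\geq\;\int_X\langle S^n\rangle-\sum_{k=1}^n\binom{n}{k}(\alpha^{n-k}\cdot\beta^k),
\]
so the proof reduces to establishing the matching identity $\int_X\langle S^n\rangle=(\alpha^n)$.

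The main obstacle is precisely this last identity. While the envelope construction guarantees that $T$ has minimal singularities in $\alpha-\beta$, the current $S=\omega+dd^{c}\phi^{*}$ is subject to the side constraint $S\geq\eta$ and is in general not of minimal singularities in $\alpha$: the potential $\phi^{*}$ need not be bounded and $S$ may carry divisorial mass reflecting the ``negative part'' of $\alpha-\beta$, in which case one only has the strict inequality $\int_X\langle S^n\rangle<(\alpha^n)$, insufficient for the argument above. Closing the gap seems to require a more refined construction in which $S$ is replaced by a K\"ahler current with analytic singularities adapted to the algebraic structure of $\beta$---for instance one obtained by Demailly regularization on a log-resolution $\mu:\tilde X\to X$ of the base locus of some $|mL|$---and in which one shows that the defect $(\alpha^n)-\int_X\langle S^n\rangle$ is compensated by the contributions from the even-$k$ terms that were discarded. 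This careful interplay between the envelope construction, Demailly regularization, and the algebraicity of $\beta$ is where the projectivity of $X$ is crucially used, and I expect it to be the technical heart of the proof.
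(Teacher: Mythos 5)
There is a genuine gap, and you have in fact located it yourself: your argument hinges on the identity $\int_X\langle S^n\rangle=(\alpha^n)$ for $S=\omega+dd^c\phi^*$, and this identity is false in general. The potential $\phi^*$ has minimal singularities \emph{in the class $\alpha-\beta$}, not in $\alpha$; when $\alpha-\beta$ is big but not nef, $\phi^*$ is unbounded and $S$ is a strictly more singular representative of $\alpha$ than one with minimal singularities, so by monotonicity of non-pluripolar masses one only gets $\int_X\langle S^n\rangle\le(\alpha^n)$, typically with strict inequality. Since the whole estimate rests on this endpoint term, the argument as written does not close, and the "more refined construction" you gesture at is precisely the missing content. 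There is also a smaller logical slip earlier: positivity of $(\alpha^n)-\sum_{k\ge1}\binom{n}{k}(\alpha^{n-k}\cdot\beta^k)$ does not a priori imply that $\alpha-\beta$ is big --- that implication is essentially the statement being proved. The paper handles this by setting $t_0:=\sup\{t\in[0,1]:\alpha-t\beta\text{ big}\}$, proving the inequality under the bigness assumption, and then using continuity of the volume together with Boucksom's theorem (positive volume in the limit implies big) to force $t_0=1$.

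The paper's actual proof is built to avoid exactly the mass-loss problem you ran into. Instead of taking the envelope directly in the class $\alpha-\beta$, it works with envelopes \emph{in the class $\alpha$} with moving obstacles: writing $g=\ln|s|_h^2$ for a section $s$ of $L$ (so $dd^cg=[Y]-\omega$) and $g_R=\max_{\mathrm{reg}}(g,-R)$, it sets $\phi_R=\sup\{\psi\le g_R:\psi\in PSH(X,\theta)\}$. Each $\phi_R$ is \emph{bounded}, hence has full Monge--Amp\`ere mass $(\alpha^n)$, and Berman--Demailly gives $\MA_{\theta}(\phi_R)=\mathbbm{1}_{D_R}(\theta+dd^cg_R)^n$ on the contact set $D_R$. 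The decreasing limit $\phi_\infty$ satisfies that $\phi_\infty-g$ has minimal singularities in $\alpha-\beta$, so $\vol(\alpha-\beta)=\int_X\MA_\theta(\phi_\infty)$, and Bedford--Taylor continuity on a compact set $\overline{U}$ avoiding $Y\cup E_{nK}(\alpha-\beta)$ reduces everything to bounding the escaping mass $\lim_R\int_{D_R\cap U^c}(\theta+dd^cg_R)^n$. Because each $g_R$ is $\omega$-psh, one has $(\theta+dd^cg_R)^n\le\sum_{k=0}^n\binom{n}{k}\theta^{n-k}\wedge(dd^cg_R+\omega)^k$ with every term a positive measure of total mass $(\alpha^{n-k}\cdot\beta^k)$, which is exactly where the binomial error term in the statement comes from. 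In short: the binomial sum in the Proposition is not an artifact of a formal expansion of $\langle(S-\eta)^n\rangle$, but a quantitative bound on the Monge--Amp\`ere mass that concentrates near $Y$ as $R\to\infty$; your approach has no mechanism to control that loss, which is why the key identity fails.
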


\begin{remark}
In the original version of our paper we established the transcendental Morse inequality directly, not going via Proposition \ref{propnewest}. However, Boucksom's suggestion of using the weaker estimate of Proposition \ref{propnewest} simplifies the proof (while not changing the core of the argument), and hence we adopt that route here.
\end{remark}

The details of how Proposition \ref{propnewest} implies Theorem \ref{mainthmA} and also Theorem \ref{mainthmB} are given in Appendix \ref{Seb}, graciously provided by S\'ebastien Boucksom.

In \cite{BFJ} Boucksom-Favre-Jonsson proved, using the estimate (\ref{volineqintro}), the following two theorems:

\begin{thm} \label{BFJthm1}
The volume function is $C^1$ on $\mathcal{E}_{NS}^{\circ}$ and the partial derivatives are given by $$\frac{d}{dt}\bigg|_{t=0}\vol(\alpha+t\gamma)=n\langle\alpha^{n-1}\rangle \cdot \gamma.$$
\end{thm}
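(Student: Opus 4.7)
The plan is to follow the Boucksom--Favre--Jonsson strategy: derive matching one-sided bounds on the directional derivative of $\vol$ by combining the Morse inequality (\ref{volineqintro}) with continuity of a positive intersection product. First I would construct the positive intersection class $\langle\alpha^{n-1}\rangle \in N_1(X)$ of a big class $\alpha$ via Fujita-type approximation: choose modifications $\mu_k : \tilde{X}_k \to X$ and nef classes $\tilde\beta_k$ on $\tilde{X}_k$ with $\tilde\beta_k \leq \mu_k^*\alpha$ and $(\tilde\beta_k^n) \to \vol(\alpha)$, and set $\langle\alpha^{n-1}\rangle := \lim_k \mu_{k,*}(\tilde\beta_k^{n-1})$. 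The existence of the limit, its independence of the approximants, and its continuity in $\alpha$ on $\mathcal{E}_{NS}^{\circ}$ all rest on an orthogonality identity $\langle\alpha^n\rangle = \langle\alpha^{n-1}\rangle \cdot \alpha = \vol(\alpha)$, itself a consequence of the Morse inequality applied to near-optimal approximants.

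The core step is a generalised Morse inequality at big classes: for any big $\alpha$ and any $\gamma \in NS_{\mathbb{R}}(X)$,
\begin{equation}\label{genMorseplan}
\vol(\alpha + t\gamma) \geq \vol(\alpha) + n t\, \langle\alpha^{n-1}\rangle \cdot \gamma + O(t^2) \qquad (t \to 0).
\end{equation}
To prove (\ref{genMorseplan}) I would write $\gamma = \omega_1 - \omega_2$ as a difference of ample classes and apply Theorem \ref{mainthmB} on $\tilde{X}_k$ to the nef pair $(\tilde\beta_k + t\mu_k^*\omega_1,\,t\mu_k^*\omega_2)$; together with the monotonicity $\vol(\alpha + t\gamma) \geq \vol(\tilde\beta_k + t\mu_k^*\gamma)$ coming from $\tilde\beta_k \leq \mu_k^*\alpha$, this produces a lower bound whose expansion in $t$, combined with the projection formula $\tilde\beta_k^{n-1} \cdot \mu_k^*\gamma = \mu_{k,*}(\tilde\beta_k^{n-1}) \cdot \gamma$ and the passage $k \to \infty$, yields (\ref{genMorseplan}) with $O(t^2)$ constant bounded uniformly for $\alpha$ in any compact subset of $\mathcal{E}_{NS}^{\circ}$.

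Finally, I would invoke (\ref{genMorseplan}) twice to extract the derivative. Applying it at the nearby class $\alpha + t\gamma$ in the direction $-\gamma$ gives
\begin{equation*}
\vol(\alpha + t\gamma) - \vol(\alpha) \leq n t\, \langle(\alpha + t\gamma)^{n-1}\rangle \cdot \gamma + O(t^2),
\end{equation*}
so that continuity of $\alpha' \mapsto \langle(\alpha')^{n-1}\rangle$ upgrades the right-hand side to $n t\,\langle\alpha^{n-1}\rangle\cdot\gamma + o(t)$. Combined with (\ref{genMorseplan}), this shows that the one-sided derivative at $t = 0^+$ exists and equals $n\langle\alpha^{n-1}\rangle \cdot \gamma$; replacing $t$ by $-t$ handles the left derivative, and continuity of the positive product then promotes pointwise differentiability to $C^1$ regularity. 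The principal obstacle in this plan is not the application of Theorem \ref{mainthmB}, which we treat as a black box, but rather the continuity of $\alpha \mapsto \langle\alpha^{n-1}\rangle$ and the uniform $O(t^2)$ control, both of which ultimately rely on the orthogonality relation for the positive product --- the most delicate ingredient in the \cite{BFJ} framework.
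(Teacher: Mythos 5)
Your proposal is correct, and its engine is the same as the one the paper relies on: Fujita approximation of $\a$ by nef classes on modifications, the Morse inequality for nef classes applied upstairs, and a two-sided sandwich closed by continuity of positive intersection products on the big cone. Note that the paper does not actually reprove this $NS$-statement (it quotes it from \cite{BFJ}); the closest in-paper argument is Boucksom's appendix, which proves the transcendental analogue. Where you genuinely diverge from that appendix is in the organization. The appendix makes the orthogonality relation $\vol(\a)=\a\cdot\langle\a^{n-1}\rangle$ the pivot: it proves it from Proposition \ref{propnewest} by applying the Morse inequality to $\a_j+tE_j$ and optimizing a quadratic in $t$, and then shows orthogonality is equivalent to differentiability via the concavity inequality (\ref{equ:concave}) for $\vol^{1/n}$ (Lemma \ref{lem:diff}). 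You instead perturb the approximants directly, applying the Morse inequality to the nef pair $(\tilde{\beta}_k+t\mu_k^*\om_1,\,t\mu_k^*\om_2)$ and reading off the first-order term; since $(\tilde{\beta}_k^n)\to\vol(\a)$ and $\mu_{k*}(\tilde{\beta}_k^{n-1})\cdot\g\to\langle\a^{n-1}\rangle\cdot\g$, this yields the one-sided expansion without ever isolating orthogonality (which in fact drops out of your inequality upon taking $\g=\a$). Your route is somewhat more direct for the differentiability statement alone; the appendix's route has the advantage of delivering Theorem D and the duality statement from the same pivot. Two minor points: for the $NS$-version only the classical algebraic Morse inequality of \cite{Laz04} is needed on $\tilde X_k$, so invoking Theorem \ref{mainthmB} is more than required unless you aim at the transcendental case; and the continuity of $\a\mapsto\langle\a^{n-1}\rangle$ on the big cone is established in \cite{BFJ} from monotonicity, super-additivity and homogeneity of positive products, independently of orthogonality, so your closing worry is milder than you suggest. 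Your uniform $O(t^2)$ control is sound, since every error term is a mixed intersection of nef classes dominated by $\mu_k^*H$ for a fixed ample $H\ge\a$, hence bounded by $(H^{n-j}\cdot\om_1^a\cdot\om_2^b)$ via the projection formula.
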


\begin{thm} \label{BFJthm2}
For any class big class $\alpha\in NS_{\R}$ we have that $$\vol(\alpha)=\langle \alpha^{n-1}\rangle \cdot \alpha.$$ In particular a prime divisor $D$ will lie in the augmented base locus of $L$ iff $$\langle c_1(L)^{n-1} \rangle \cdot c_1(D)=0.$$ 
\end{thm}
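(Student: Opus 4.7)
The plan is to deduce both parts of the theorem from Theorem \ref{BFJthm1} (the differentiability of $\vol$), together with the $n$-homogeneity of $\vol$ and a Nakamaye-type description of the augmented base locus.

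The identity $\vol(\alpha)=\langle \alpha^{n-1}\rangle\cdot\alpha$ is a direct application of Euler's identity for homogeneous functions. From Definition \ref{defvolume} it is immediate that $\vol(t\alpha)=t^{n}\vol(\alpha)$ for every $t>0$. Differentiating at $t=1$ yields
\[
n\,\vol(\alpha)\;=\;\frac{d}{ds}\bigg|_{s=0}\vol(\alpha+s\alpha),
\]
and by Theorem \ref{BFJthm1} applied with $\gamma=\alpha$ the right-hand side equals $n\,\langle \alpha^{n-1}\rangle\cdot\alpha$. Dividing by $n$ gives the identity.

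For the augmented base locus statement, I would first use Theorem \ref{BFJthm1} to interpret $\langle c_1(L)^{n-1}\rangle\cdot [D]$ as $-\tfrac{1}{n}$ times the right derivative of $t\mapsto \vol(c_1(L)-t[D])$ at $t=0$. The easy direction is then: if $D$ is a divisorial component of $\mathbb{B}_{+}(L)$, then $tD$ can be absorbed into the negative part of the divisorial Zariski decomposition of $c_1(L)$ for all small $t>0$, so the function $t\mapsto \vol(c_1(L)-t[D])$ is constant on a segment $[0,\varepsilon)$. Hence the directional derivative vanishes and $\langle c_1(L)^{n-1}\rangle\cdot [D]=0$.

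The converse is the harder direction, and this is where the main obstacle lies. Suppose $D\not\subseteq \mathbb{B}_{+}(L)$; one needs to produce a strictly positive lower bound for $\langle c_1(L)^{n-1}\rangle\cdot [D]$. The strategy is to combine Fujita approximation with the definition of the augmented base locus: there is a modification $\mu\colon \tilde X\to X$ and a decomposition $\mu^{*}c_1(L)=\tilde\beta+[\tilde E]$ with $\tilde\beta$ K\"ahler and $\tilde E$ an effective $\mathbb{R}$-divisor whose support does not contain the strict transform $\tilde D$ of $D$. Pairing $\tilde\beta^{n-1}$ with $\tilde D$ then yields a strictly positive contribution, and taking the supremum of such contributions over all Fujita-type approximations realizes $\langle c_1(L)^{n-1}\rangle\cdot [D]$ as a positive number. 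The technical heart of the argument is arranging Fujita approximations so that a prescribed divisor $D$ outside $\mathbb{B}_{+}(L)$ is kept out of the effective part $\tilde E$; this step ultimately rests on Nakamaye's theorem identifying the divisorial components of $\mathbb{B}_{+}(L)$ with the support of the negative part of the divisorial Zariski decomposition of $c_1(L)$, whose transcendental formulation is made available by the Morse inequality proved in this paper.
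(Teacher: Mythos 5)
Your derivation of the identity $\vol(\alpha)=\langle\alpha^{n-1}\rangle\cdot\alpha$ from Theorem \ref{BFJthm1} via Euler's relation for the $n$-homogeneous function $\vol$ is correct, and Appendix \ref{Seb} records exactly this implication (``applying (ii) with $\gamma=\alpha$ yields (i)''). Be aware, however, that it runs opposite to the logical order actually used: both in \cite{BFJ} and in the appendix, the orthogonality identity is what gets proved first --- directly from the Morse-type inequality, by writing approximate Zariski decompositions $\mu_j^*\alpha=\alpha_j+E_j$ and optimizing a quadratic in $t$ to force $(\alpha_j^{n-1}\cdot E_j)\to 0$ --- and differentiability is then \emph{deduced} from it. So your argument is a legitimate use of the quoted Theorem \ref{BFJthm1}, but it is not an independent proof and could not substitute for the paper's route.

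The second half of your proposal has a genuine gap. For the ``easy'' direction you claim that if $D$ is a divisorial component of the augmented base locus $\mathbb{B}_+(L)$, then $tD$ can be absorbed into the negative part of the divisorial Zariski decomposition, so that $t\mapsto\vol(c_1(L)-t[D])$ is constant near $0$. This confuses $\mathbb{B}_+(L)$ with the (generally strictly smaller) restricted base locus $\mathbb{B}_-(L)$: it is the divisorial components of the \emph{latter} that form the support of the negative part. Concretely, on $X=\mathrm{Bl}_p\mathbb{P}^2$ take $L=\pi^*\mathcal{O}_{\mathbb{P}^2}(1)$ and $D=E$ the exceptional curve: $L$ is nef, so its negative part is zero and nothing can be absorbed, yet $E\subseteq\mathbb{B}_+(L)$ and $\vol(L-tE)=1-t^2$ is not constant near $t=0$ (its derivative does vanish, consistent with $L\cdot E=0$, but not for the reason you give). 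The same confusion infects your appeal to ``Nakamaye's theorem,'' which identifies $\mathbb{B}_+$ of a nef and big class with the null locus, not with the support of the negative Zariski part. Finally, in the ``hard'' direction the only real issue --- why the strictly positive quantities $\alpha_j^{n-1}\cdot\tilde D_j$ produced by Fujita approximations admit a lower bound that survives the limit defining $\langle c_1(L)^{n-1}\rangle\cdot c_1(D)$ --- is precisely the point you leave open; the standard resolution (\cite{BFJ}, \cite{LM}, \cite{ELMNP}) identifies this pairing with the restricted volume $\vol_{X|D}(L)$ and invokes the positivity of restricted volumes off $\mathbb{B}_+(L)$, which is a substantive input rather than a formality.
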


Here $\langle \alpha^{n-1}\rangle$ denotes a positive selfintersection of $\alpha$ (see Section \ref{secMA}), which is equal to $\alpha^{n-1}$ when $\alpha$ is nef but not in general. This last result can be thought of as an orthogonality relation (see \cite{BFJ} and Appendix \ref{Seb}).

Appendix \ref{Seb} by Boucksom shows that the analogues of Theorem \ref{BFJthm1} and Theorem \ref{BFJthm2} also are consequences of Proposition \ref{propnewest}, hence we get:   

\begin{customthm}{C} \label{mainthmC}
On a projective manifold $X$ the volume function is continuously differentible on the big cone with  $$\frac{d}{dt}\bigg|_{t=0}\vol(\alpha+t\gamma)=n\langle\alpha^{n-1}\rangle \cdot \gamma.$$ 
\end{customthm}

\begin{customthm}{D} \label{mainthmD}
For any big class $\alpha\in \mathcal{E}^{\circ}$ on a projective manifold $X$ we have that $$\vol(\alpha)=\langle \alpha^{n-1}\rangle \cdot \alpha.$$ In particular a prime divisor $D$ will lie in the non-K\"ahler locus of $\alpha$ iff $$\langle \alpha^{n-1} \rangle \cdot c_1(D)=0.$$
\end{customthm}

\subsection{Related work}

As has already been said Boucksom-Demailly-P\u aun-Peternell proved the integral version of Conjecture \ref{BDPPconj} in \cite{BDPP}. They also settled the conjecture in the case when $X$ is compact hyperk\"ahler, or more generally, a limit by deformation of projective manifolds with Picard number $\rho=h^{1,1}$ (see \cite{BDPP}, Corollary 10). In the projective case they established a weaker version of Theorem \ref{mainthmB}, namely that if $\alpha$ is nef and $\beta$ is nef and lies in $NS_{\R}(X)$ then $$\vol(\alpha-\beta)\geq (\alpha^n)-\frac{(n+1)^2}{4}(\alpha^{n-1}\cdot \beta).$$ Their proof is different from ours. As ours it uses a family of $\theta$-psh functions that converge to something with a logarithmic singularity along the divisor of $\omega$. But instead of being envelopes these functions solve a Monge-Amp\`ere equation, which concentrates the mass along the divisor.

In \cite{X1} Xiao proved a kind of weaker qualitative version of (\ref{volineqintro}), namely given two nef classes $\alpha$ and $\beta$ on a compact K\"ahler manifold $X$ then if $$(\alpha^n)>4n(\alpha^{n-1}\cdot\beta)$$ it follows that $\alpha-\beta$ is big. Later, using the same kind of techniques as Xiao, Popovici improved on this, showing that $$(\alpha^n)>n(\alpha^{n-1}\cdot\beta)$$ implies $\alpha-\beta$ to be big. Then Xiao refining the techniques further established in \cite{X2} that if $\alpha$ is big and $\beta$ movable then $$\vol(\alpha)>n(\langle \alpha^{n-1}\rangle \cdot\beta)$$ implies $\alpha-\beta$ to be big. While these results are qualitative the recent work \cite{P2} by Popovici gives quantitative results as well, namely that if $\alpha$ and $\beta$ are nef and $$\vol(\alpha-\beta)\geq ((\alpha-\beta)^n)$$ then $$\vol(\alpha-\beta)\geq (\alpha^n)-n(\alpha^{n-1}\cdot \beta).$$  

The differentiability of the volume of big line bundles was proved independently and at the same time as Boucksom-Favre-Jonsson by Lazarsfeld-Musta\c{t}\u{a} \cite{LM} using the theory of Okounkov bodies. They expressed the derivative in terms of restricted volumes, and as a consequence the restricted volume of a big line bundle $L$ along a prime divisors $D$ coincides with the pairing $\langle c_1(L)^{n-1} \rangle\cdot c_1(D)$. The restricted volume is only really defined along subvarieties that are not contained in the augmented base locus, while the pairing $\langle c_1(L)^{n-1} \rangle\cdot c_1(D)$ always is defined and furthermore depend continuously on $L$. It thus follows from Theorem \ref{BFJthm2} that if a prime divisor $D$ is not contained in the augmented base locus  for small ample perturbations $L+\epsilon A$ of $L$ and furthermore the restricted volume of $L+\epsilon A$ along $D$ remains bounded from below by some positive number then $D$ cannot be contained in the augmented base locus of $L$. A deep result of Ein-Lazarsfeld-Musta\c{t}\u{a}-Nakamaye-Popa \cite{ELMNP} states that the restricted volume can be used to characterize the whole augmented base locus. Whether the analogous result pertaining to the non-K\"ahler locus of a big class is true is still non known, but we note that our Theorem \ref{mainthmD} can be seen as a partial result in that direction. The nef case though was recently completely settled by Collins-Tosatti \cite{CT}. They prove that if $\alpha$ is a nef and big class on a compact K\"ahler manifold then the non-K\"ahler locus of $\alpha$ is equal to the null-locus, i.e. union of all irreducible analytic subspaces $V$ such that $$\int_V\alpha^{\dim V}=0.$$

\begin{ackn}
I would like to thank Robert Berman, Bo Berndtsson, Jean-Pierre Demailly, Mihai P\u aun, Valentino Tosatti and Jian Xiao for fruitful discussions on this topic and valuable comments on an early draft of this paper. I thank Chinh Lu for suggesting a simplification of the proof. I am particularly grateful for the suggestions of simplifications given by S\'ebastien Boucksom, and especially for him writing the appendix, thus providing the deriviation of Theorem \ref{mainthmA}, \ref{mainthmB}, \ref{mainthmC} and \ref{mainthmD} from Proposition \ref{propnewest}.  
\end{ackn}

\section{Preliminaries} \label{Prel}

\subsection{$\theta$-psh functions}

Let $X$ be a compact K\"ahler manifold, $\theta$ a closed smooth real $(1,1)$-form on $X$ and $\alpha:=[\theta]\in H^{1,1}_{\mathbb{R}}(X)$ its cohomology class. We say that a function $u: X\to [-\infty,\infty)$ is $\theta$-psh if whenever locally $\theta=dd^cv$ for some smooth function $v$ we have that $u+v$ is plurisubharmonic and not identically equal to $-\infty$. Thus $\theta+dd^cu$ is a closed positive $(1,1)$-current. Conversely, if $T$ is a closed positive $(1,1)$-current in $\alpha$ then there exists a $\theta$-psh function $u$ such that $T=\theta+dd^cu$, and this $u$ is unique up to a constant. 

We say that $u$ is strictly $\theta$-psh if it is $\theta-\epsilon\omega$-psh for some $\epsilon>0$ and K\"ahler form $\omega$.

If $\theta'$ is another closed smooth real $(1,1)$-form cohomologuous to $\theta$, then by the $dd^c$-lemma there exists a smooth function $f$ such that $\theta'=\theta+dd^c f$. Thus one sees that $u$ is $\theta$-psh iff $u-f$ is $\theta'$-psh.

The set of $\theta$-psh functions is denoted by $PSH(X,\theta)$. The class $\alpha$ is called pseudoeffective if it contains a closed positive current and we  note that this is equivalent to $PSH(X,\theta)$ being nonempty. A class is said to be big if for some $\epsilon>0$ and some K\"ahler class $\beta$ we have that $\alpha-\epsilon\beta$ is pseudoeffective.

A $\theta$-psh function $u$ is said to have analytic singularities if locally it can be written as $c\ln(\sum_i |g_i|^2)+f$ where $c>0,$ $g_i$ is a finite collection of local holomorphic functions and $f$ is smooth. A deep regularization result of Demailly states that if $\alpha$ is big then there are strictly $\theta$-psh functions with analytic singularities. 

\begin{definition}
If $\alpha$ is big we say that a point $x\in X$ lies in the ample locus of $\alpha$, denoted by $\Amp(\a)$, if there exists a striclty $\theta$-psh function with analytic singularities which is smooth near $x$. The complement of $\Amp(\alpha)$ is called the non-K\"ahler locus of $\alpha$, denoted $E_{nK}(\alpha)$, and we note that $E_{nK}(\alpha)$ will be a proper analytic subset of $X$.
\end{definition}

\begin{remark}
When $X$ is projective and $\alpha=c_1(L)$ for some holomorphic line bundle $L$ then $E_{nK}(\alpha)$ coincides with the augmented base locus of $L$ (see e.g. \cite{BEGZ}).
\end{remark}

A $\theta$-psh function $u$ is said to have minimal singularities if for every $v\in PSH(X,\theta)$ we have that $u\geq v+O(1)$. It is easy to show using envelopes that whenever $\alpha$ is pseudoeffective one can find $\theta$-psh functions with minimal singularities. They are far from unique though, in fact this is what we will exploit later in the proof of Theorem \ref{mainthmB}.

It is clear that if $\alpha$ is big and $u\in PSH(X,\theta)$ has minimal singularities then $u$ is locally bounded on $\Amp(\alpha)$.

\subsection{Monge-Amp\`ere measures and positive intersections} \label{secMA}

A key tool will be the notion of the Monge-Amp\`ere measure of a psh or $\theta$-psh function. This theory was first developed in the local setting by Bedford-Taylor  \cite{BT} and later in the geometric setting of compact K\"ahler manifolds by Boucksom-Eyssidieux-Guedj-Zeriahi in \cite{BEGZ}.

We will start discussing the local picture. Let $u$ be a psh function on $U$ where $U$ is some domain in $\mathbb{C}^n$. If $u$ is smooth then $\MA(u):=(dd^cu)^n$ is a positive measure called the Monge-Amp\`ere of $u$. However, in general $dd^cu$ is a form with measure coefficients, and since the multiplication of measures typically is illdefined $(dd^cu)^n$ might not make sense. Bedford-Taylor showed in \cite{BT} that if one assumes $u$ to be locally bounded then one can define $(dd^cu)^n$ inductively by $(dd^cu)^{k+1}:=dd^c(u(dd^cu)^k)$ and they proved that $\MA(u):=(dd^cu)^n$ will be a positive measure, still called the Monge-Amp\`ere of $u$. 

Bedford-Taylor also proved some fundamental continuity properties of the Monge-Amp\`ere operator (see \cite{BT}). Here we will only mention one:

\begin{thm} \label{BTthm}
Let $u_k$ be a decreasing sequence of psh functions on $U$ such that $u:=\lim_{k\to \infty }u_k$ is locally bounded on $U$ ($u$ will then be psh). Then the Monge-Amp\`ere measures $\MA(u_k)$ converge weakly to $\MA(u)$.  
\end{thm}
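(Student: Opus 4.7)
The plan is to prove the statement by induction on $j$, establishing that $(dd^c u_k)^j \to (dd^c u)^j$ weakly on $U$ for every $1 \le j \le n$; the case $j = n$ is the theorem. For the base case $j=1$, the sequence $\{u_k\}$ is uniformly locally bounded (bounded above by $u_1$ and below on compacts by the locally bounded limit $u$), so the monotone convergence theorem gives $u_k \to u$ in $L^1_{\mathrm{loc}}(U)$, and applying the continuous linear operator $dd^c$ to distributions yields $dd^c u_k \to dd^c u$ weakly.

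For the inductive step, assume $(dd^c u_k)^j \to (dd^c u)^j$ weakly. Since the Monge-Amp\`ere operator is defined inductively by $(dd^c u_k)^{j+1} = dd^c\bigl(u_k (dd^c u_k)^j\bigr)$ and $dd^c$ is continuous in the weak topology on currents, it suffices to show
$$u_k (dd^c u_k)^j \longrightarrow u (dd^c u)^j \quad \text{weakly on } U.$$
The natural decomposition
$$u_k (dd^c u_k)^j - u (dd^c u)^j = (u_k - u)(dd^c u_k)^j + u\bigl((dd^c u_k)^j - (dd^c u)^j\bigr)$$
splits the problem into two pieces. For the second term, I would test against a smooth compactly supported form $\chi$, approximate the locally bounded $u$ on a neighbourhood of $\supp \chi$ by a decreasing sequence of smooth psh functions $u^{(\e)}$ via standard convolution, and combine the inductive hypothesis with the Chern-Levine-Nirenberg uniform local mass bound on $(dd^c u_k)^j$ to let first $k\to\infty$ and then $\e\to 0$.

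The main obstacle is the first term $(u_k - u)(dd^c u_k)^j$. Here $u_k - u$ is a nonnegative decreasing sequence of uniformly bounded functions tending pointwise to zero, but the currents $(dd^c u_k)^j$ may have singular parts, so a naive appeal to dominated convergence is not available. My plan is to invoke Bedford-Taylor's Monge-Amp\`ere capacity on $U$: a decreasing sequence of uniformly locally bounded psh functions automatically converges in capacity, and Monge-Amp\`ere currents of locally bounded psh functions are quasi-continuous and put no mass on sets of zero capacity. For any $\eta>0$, one chooses an open set $O \subset U$ of arbitrarily small capacity outside of which the restrictions of $u,u_k$ are continuous, so that Dini's theorem gives uniform convergence $u_k \to u$ on the complement of $O$, while a standard capacity estimate controls the mass that $(dd^c u_k)^j$ can place on $O$. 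Pairing with a test form and splitting the integral accordingly gives the required decay, closing the induction and completing the proof.
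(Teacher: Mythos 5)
The paper offers no proof of this statement to compare against: it is quoted as a known result of Bedford--Taylor and cited directly to \cite{BT}, where it appears as one of the basic continuity theorems for the Monge--Amp\`ere operator. So the only meaningful review is of your sketch on its own terms. What you outline is the standard pluripotential-theoretic proof: induction on the degree $j$, reduction to the weak convergence $u_k(dd^cu_k)^j\to u(dd^cu)^j$, and control of the delicate term $(u_k-u)(dd^cu_k)^j$ via quasi-continuity, Dini's theorem off an open set of small capacity, and a uniform bound on the mass the currents $(dd^cu_k)^j$ can put on small-capacity sets. This is essentially the argument found in the standard references (Klimek, Guedj--Zeriahi), and as an outline it is correct.

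Three points need care in a full write-up. First, to avoid circularity you must establish quasi-continuity and the convergence in capacity of decreasing uniformly bounded sequences \emph{independently} of the theorem being proved; this can be done from the Chern--Levine--Nirenberg inequalities by repeated integration by parts, but in \cite{BT} the logical order is partly the reverse, so the supporting lemmas must be sourced carefully. Second, for $j<n$ the current $(dd^cu_k)^j$ is not a measure; the quantity to control on the exceptional open set $O$ is $\int_{O\cap K}(dd^cu_k)^j\wedge\omega^{n-j}$, and the needed uniform estimate $\int_E(dd^cu_k)^j\wedge\omega^{n-j}\le C(K,M)\,\mathrm{Cap}(E)$ for $\sup_k\|u_k\|_{L^\infty(K)}\le M$ is itself a lemma that should be stated and proved. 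Third, your second term $u\bigl((dd^cu_k)^j-(dd^cu)^j\bigr)$ is not finished by the inductive hypothesis plus convolution alone: the error $\int\chi\,(u^{(\e)}-u)\,(dd^cu_k)^j$ is of exactly the same nature as the first term and requires the same capacity argument. For the record, there is a somewhat more economical route (the one in \cite{BT} and in Demailly's book): the inequality $\limsup_k\int\chi\,u_k(dd^cu_k)^j\le\int\chi\,u(dd^cu)^j$ follows from monotonicity and upper semicontinuity alone, and the reverse inequality is obtained by modifying $u_k$ to agree with a fixed function near the boundary of a small ball and comparing total masses via Stokes, which bypasses quasi-continuity entirely.
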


Another important fact proven by Bedford-Taylor is that the Monge-Amp\`ere measure $\MA(u)$ of a locally bounded psh function $u$ never puts mass on proper analytic subsets (or more generally proper pluripolar subsets).

Now we come to the global picture, explored in the work of Guedj-Zeriahi (see e.g. \cite{GZ05,GZ07}) and later Boucksom-Eyssidieux-Guedj-Zeriahi (see \cite{BEGZ}).

Let $\theta$ be a closed smooth real $(1,1)$-form on a compact K\"ahler manifold $X$ and and assume that $\alpha:=[\theta]\in H^{1,1}_{\mathbb{R}}(X)$ is big. If $u$ is $\theta$-psh with minimal singularities it is locally bounded on $\Amp(\alpha)$ and we define the (nonpluripolar) Monge-Amp\`ere measure of $u$ (with respect to $\theta$) as $$\MA_{\theta}(u):=\mathbbm{1}_{\Amp(\alpha)}(dd^cu+\theta)^n.$$ Note that as in the local case $\MA_{\theta}(u)$ never puts mass on proper analytic subsets.

More generally, Boucksom-Eyssidieux-Guedj-Zeriahi shows that for any $p\in \{1,...,n\}$ one can define a positive current $$\langle (dd^cu+\theta)^p\rangle:=\mathbbm{1}_{\Amp(\alpha)}(dd^cu+\theta)^p,$$ which also will be closed by the Skoda-El Mir theorem (see \cite{BEGZ}).

From the fundemantal \cite[Thm. 1.16]{BEGZ} follows the next crucial result:

\begin{thm} \label{BEGZthm}
If $\alpha$ is big and $u\in PSH(X,\theta)$ has minimal singularities then the cohomology class of $\langle (dd^cu+\theta)^p\rangle$ is independent of $u$; this class is thus denoted by $\langle \alpha^p\rangle$. For $p=n$ we have that $\langle \alpha^n\rangle=\vol(\alpha)$, or in other words 
\begin{equation} \label{eqforvolume}
\vol(\alpha)=\int_X \MA_{\theta}(u).
\end{equation}
\end{thm}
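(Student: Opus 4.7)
The statement combines two independent assertions: cohomological independence of $\langle (\theta+dd^c u)^p\rangle$ under the choice of minimally singular $u$, and the identification $\langle\alpha^n\rangle=\vol(\alpha)$. I would treat them separately, exploiting the fact that on $\Amp(\alpha)$ every minimally singular $\theta$-psh function is locally bounded, so Bedford--Taylor's theory applies there, and that nonpluripolar products put no mass on either pluripolar sets or the analytic set $E_{nK}(\alpha)$.

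For independence, it suffices to show that for any two $\theta$-psh functions $u,v$ with minimal singularities and any smooth closed $(n-p,n-p)$-form $\eta$, the integrals $\int_X\langle(\theta+dd^c u)^p\rangle\wedge\eta$ and $\int_X\langle(\theta+dd^c v)^p\rangle\wedge\eta$ agree. By minimal singularities, $u=v+O(1)$ wherever both are finite, so both are locally bounded on $\Amp(\alpha)$ and differ there by a bounded function. The idea is to introduce $w:=\max(u,v)$ (again $\theta$-psh with minimal singularities) and exploit plurifine locality of the Bedford--Taylor product: on the plurifinely open set $\{u>v\}\cap\Amp(\alpha)$ one has $(\theta+dd^c w)^p=(\theta+dd^c u)^p$, and symmetrically on $\{u<v\}$. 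Since neither $\mathbf{1}_{\Amp(\alpha)}(\theta+dd^c u)^p$ nor the analogous measure for $w$ charges the pluripolar set $\{u=v=-\infty\}$ or the analytic set $E_{nK}(\alpha)$, one deduces that the mass of $\langle(\theta+dd^c w)^p\rangle$ paired with $\eta$ coincides with that of $\langle(\theta+dd^c u)^p\rangle$, and by symmetry also with that of $\langle(\theta+dd^c v)^p\rangle$.

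For the volume identity, I would prove both inequalities. To show $\int_X\MA_\theta(u)\geq\vol(\alpha)$, take any modification $\mu:\tilde X\to X$ and K\"ahler class $\tilde\beta\leq\mu^*\alpha$ with K\"ahler representative $\tilde\omega\in\tilde\beta$; pseudoeffectivity of $\mu^*\alpha-\tilde\beta$ yields a $\mu^*\theta$-psh function $\tilde u$ with $\mu^*\theta+dd^c\tilde u\geq\tilde\omega$ that may be chosen with minimal singularities. A standard Bedford--Taylor inequality on $\Amp(\mu^*\alpha)$ gives $\langle(\mu^*\theta+dd^c\tilde u)^n\rangle\geq\tilde\omega^n$; applying part (i) on $\tilde X$ and birational invariance of the nonpluripolar Monge--Amp\`ere integral yields $\int_X\MA_\theta(u)\geq(\tilde\beta^n)$, and taking suprema gives the bound. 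For $\int_X\MA_\theta(u)\leq\vol(\alpha)$ I would invoke Demailly's regularization to obtain strictly $\theta$-psh functions $u_j$ with analytic singularities whose singular currents $\theta+dd^c u_j$ admit log-resolutions $\mu_j:\tilde X_j\to X$ decomposing $\mu_j^*(\theta+dd^c u_j)$ as a K\"ahler form plus an effective divisor; the corresponding K\"ahler classes $\tilde\beta_j\leq\mu_j^*\alpha$ satisfy $(\tilde\beta_j^n)\to\int_X\MA_\theta(u)$ by monotone convergence of the nonpluripolar masses along the Demailly approximation. The hardest step is plurifine locality and the control of boundary mass at $E_{nK}(\alpha)$ needed in the independence argument, which is the technical heart of \cite[Thm.\ 1.16]{BEGZ}; the convergence $(\tilde\beta_j^n)\to\int_X\MA_\theta(u)$ in the Demailly step is the other delicate point, requiring that no mass is lost in the limit.
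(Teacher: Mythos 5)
This theorem is not proved in the paper at all: it is imported verbatim from \cite[Thm.~1.16]{BEGZ} (together with the accompanying remark reconciling the two definitions of volume), so there is no in-paper argument to compare yours against. Judged on its own terms, your sketch correctly identifies the two assertions and the role of $\Amp(\alpha)$ and of non-charging of pluripolar sets, but it has one genuine gap and one place where it simply defers to the hard part of the cited result.

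The gap is in the independence argument. Plurifine locality gives $(\theta+dd^c w)^p=(\theta+dd^c u)^p$ on the plurifinely \emph{open} set $\{u>v\}\cap\Amp(\alpha)$ and symmetrically on $\{u<v\}$, but this says nothing about the contact set $\{u=v\}$, which is not plurifinely open in general and can carry positive mass for all three measures; equality of the functions $w=u$ on that set does not imply equality of their Monge--Amp\`ere measures there. So the conclusion that $\langle(\theta+dd^c w)^p\rangle$ and $\langle(\theta+dd^c u)^p\rangle$ have the same pairing with $\eta$ does not follow as stated. The argument actually used in \cite{BEGZ} is different: since $u$ and $v$ both have minimal singularities, $\psi:=v-u$ is bounded on the complement of the polar set, one expands $\langle(\theta+dd^c v)^p\rangle-\langle(\theta+dd^c u)^p\rangle$ telescopically into terms each containing a factor $dd^c\psi$, and one shows each such term is $dd^c$-exact by the integration-by-parts theorem for closed positive currents with small unbounded locus (\cite[Thm.~1.14]{BEGZ}), which applies because minimal-singularity potentials in a big class are locally bounded off the analytic, hence closed complete pluripolar, set $E_{nK}(\alpha)$. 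For the volume identity, your lower bound $\int_X\MA_\theta(u)\ge(\tilde\beta^n)$ is essentially right (it needs the comparison of nonpluripolar masses between the minimal-singularity current and the pulled-back K\"ahler current, which again rests on the monotonicity statements of \cite{BEGZ}), but the upper bound is exactly the delicate loss-of-mass statement along Demailly approximation that constitutes the content of \cite{B1} and \cite[Thm.~1.16]{BEGZ}; flagging it as ``the other delicate point'' does not discharge it, so as written the proposal assumes the substance of what is to be proved there.
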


\begin{remark}
In \cite{BEGZ} they choose to define $\vol(\alpha)$ using (\ref{eqforvolume}), but \cite[Thm. 1.16]{BEGZ} shows that this definition coincides with our Definition \ref{defvolume}. Similarly,  in \cite{BDPP} and \cite{BFJ} the positive intersections $\langle \alpha^p\rangle$ are defined using modifications, but \cite[Thm. 1.16]{BEGZ} shows the equivalence of these different definitions.
\end{remark}

We will also need the following deep result of Boucksom in \cite{B1}, building on work of Demailly-P\u aun in \cite{DP} (see also \cite{C}).

\begin{thm} \label{Bthm}
Let $\alpha_k$ be a sequence of big classes that converge to a class $\alpha$. Then if $\limsup_{k\to \infty}\vol(\alpha_k)>0$ it follows that $\alpha$ is big. 
\end{thm}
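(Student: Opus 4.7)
The plan is to produce a K\"ahler current in the limit class $\alpha$, which by the Demailly--P\u aun characterization from \cite{DP} is equivalent to $\alpha$ being big. Since each $\alpha_k$ is big, hence pseudoeffective, and the pseudoeffective cone is closed under limits (by weak compactness of normalized closed positive currents, whose mass is controlled cohomologically), $\alpha$ is automatically pseudoeffective. The real content of the theorem is therefore to rule out $\alpha \in \partial \mathcal{E}$.

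First I would construct a candidate limit current. Fix a K\"ahler form $\omega$ and choose smooth representatives $\theta_k \in \alpha_k$ converging smoothly to some $\theta \in \alpha$. For each $k$ pick $u_k \in \PSH(X, \theta_k)$ of minimal singularities, normalized by $\sup_X u_k = 0$. By Theorem \ref{BEGZthm} one has $\int_X \MA_{\theta_k}(u_k) = \vol(\alpha_k)$, which by hypothesis is at least some $c > 0$ along a subsequence. Standard compactness in $\PSH$ yields, after further extraction, an $L^1$-limit $u \in \PSH(X, \theta)$, giving a closed positive current $T := \theta + dd^c u$ in $\alpha$. In parallel, Demailly's regularization applied to each big class $\alpha_k$ produces auxiliary $v_k \in \PSH(X, \theta_k)$ with analytic singularities along an analytic set $Z_k$, smooth on $X \setminus Z_k$, satisfying $\theta_k + dd^c v_k \geq \epsilon_k \omega$ there, with Monge--Amp\`ere mass approximating $\vol(\alpha_k)$; these are genuine K\"ahler currents representing $\alpha_k$, but with parameters $\epsilon_k$ that may degenerate.

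The crux of the argument is to upgrade $T$ to a K\"ahler current by leveraging the volume lower bound. The idea is to pass to a weak-$\star$ limit of the non-pluripolar Monge--Amp\`ere measures $\MA_{\theta_k}(u_k)$, whose total masses stay $\geq c$, and then use the Demailly--P\u aun mass-concentration and intersection-positivity technique from \cite{DP}, combined with Skoda's extension theorem, to isolate a component of the limit measure absolutely continuous with respect to $\omega^n$ with mass bounded below by a positive constant depending only on $c$; this in turn forces $T \geq \epsilon \omega$ for some uniform $\epsilon > 0$. The main obstacle lies precisely at this step: the non-pluripolar Monge--Amp\`ere operator is discontinuous under general $L^1$-convergence of $\theta_k$-psh functions, so the lower bound $\vol(\alpha_k) \geq c$ cannot be transported to the limit by soft compactness alone. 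In particular one must rule out the scenario in which the Demailly parameters $\epsilon_k$ tend to zero while nominal Monge--Amp\`ere mass persists, and this requires the deep cohomological mass-concentration machinery of \cite{DP} rather than any direct continuity of the Monge--Amp\`ere operator.
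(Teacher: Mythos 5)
First, a point of reference: the paper does not prove Theorem \ref{Bthm} at all. It is quoted as a known deep result of Boucksom \cite{B1}, building on Demailly--P\u aun \cite{DP} (see also \cite{C}), so there is no internal proof to measure your attempt against; the only question is whether your sketch would stand on its own.

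It would not, and you have in fact located the gap yourself. Everything through the construction of the limit current $T=\theta+dd^cu$ is soft and only re-establishes that $\alpha$ is pseudoeffective, which is not the content of the theorem. The step that would actually prove bigness --- converting the uniform bound $\vol(\alpha_k)\ge c>0$ into a K\"ahler current $T\ge\epsilon\omega$ in the limit class --- is delegated to ``the mass-concentration machinery of \cite{DP}'' without being carried out, and that step \emph{is} the theorem. The intermediate objects you introduce do not bridge it: the $L^1$-limit $u$ of minimal-singularity potentials need not have minimal singularities, the non-pluripolar mass $\int_X\MA_\theta(u)$ can drop to zero even though each $\int_X\MA_{\theta_k}(u_k)=\vol(\alpha_k)\ge c$, and a weak-$\star$ limit of the measures $\MA_{\theta_k}(u_k)$ may charge analytic sets, so ``isolating an absolutely continuous component of mass bounded below'' is precisely the assertion requiring proof rather than a consequence of the listed ingredients. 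For the record, the actual route in \cite{B1} is structured differently: one first uses monotonicity of the volume to reduce to a single-class statement --- since $\alpha+\epsilon\omega-\alpha_k$ is K\"ahler for $k$ large, one gets $\vol(\alpha+\epsilon\omega)\ge\vol(\alpha_k)\ge c$ for every $\epsilon>0$, so it suffices to show that a pseudoeffective class whose small K\"ahler perturbations all have volume $\ge c$ is big --- and then runs the Demailly--P\u aun concentration-of-mass argument (on $X\times X$, with K\"ahler currents with analytic singularities representing the perturbed classes) to produce the required K\"ahler current. If your plan is to invoke \cite{DP} and \cite{B1} as black boxes at exactly this point, you may as well cite the theorem itself, as the paper does.
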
    

From this we see that letting $\vol(\alpha):=0$ for $\alpha$ not big gives a continuous extension of the volume function to the whole of $H_{\mathbb{R}}^{1,1}(X)$. 

\section{Regularity of envelopes}

In our proof of Theorem \ref{mainthmB} a key role will be played by a family of envelopes. The proof will rely on us being able to control the behaviour of their Monge-Amp\`ere measures. For this we need a deep result of Berman-Demailly \cite{BD}.

\begin{thm} \label{BDThm}
Let $\theta$ be a smooth closed real $(1,1)$-form on a compact K\"ahler manifold $(X,\omega)$. Assume that the class $\alpha:=[\theta]$ is big and let $\psi_0$ be a strictly $\theta$-psh function with analytic singularities. Let $\phi$ be defined as $$\phi:=\sup\{\psi\leq 0: \psi\in PSH(X,\theta)\},$$ and let $D:=\{\phi=0\}$. Then $\phi\in PSH(X,\theta)$ has minimal singularities and for some constants $C$ and $B$ we have that $$|dd^c\phi|_{\omega}\leq C(|\psi_0|+1)^2e^{B|\psi_0|}.$$ It follows that $$\MA_{\theta}(\phi)=\mathbbm{1}_D\theta^n$$ and hence $$\vol(\alpha)=\int_X \MA_{\theta}(\phi)=\int_D\theta^n.$$  
\end{thm}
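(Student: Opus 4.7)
The theorem combines three assertions: $\phi\in \PSH(X,\theta)$ has minimal singularities, the pointwise Laplacian bound holds on the ample locus, and the Monge-Amp\`ere identity $\MA_\theta(\phi)=\mathbbm{1}_D\theta^n$ (from which the volume formula follows via Theorem \ref{BEGZthm}). The first assertion is routine: the upper-semicontinuous regularization $\phi^*$ is $\theta$-psh by a classical envelope lemma, and since $\phi\le 0$ everywhere we have $\phi^*\le 0$ as well, so $\phi^*$ is itself an admissible competitor and $\phi^*=\phi$. Minimal singularities then follow because for any $v\in \PSH(X,\theta)$ the shifted function $v-\sup_X v$ lies below $0$ and is $\theta$-psh, hence $v-\sup_X v\le\phi$.

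The core obstacle is the Laplacian estimate. My plan is to approximate $\phi$ by the decreasing family of smooth solutions $u_{\beta,\e}$ of the Monge-Amp\`ere equation
$$(\theta+\e\omega+dd^c u_{\beta,\e})^n = e^{\beta u_{\beta,\e}}\mu_\e,$$
where $\mu_\e$ is a suitably normalized smooth positive volume form and $\e>0$ ensures that $\theta+\e\omega$ is K\"ahler, so that solvability follows from Aubin-Yau. A comparison via the maximum principle against the admissible competitor $\psi_0-\sup_X\psi_0$ yields a uniform $L^\infty$ bound from above; letting first $\beta\to\infty$ and then $\e\to 0$ identifies the monotone limit with $\phi$. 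The key a priori estimate comes from applying the maximum principle to a test function of Aubin-Yau type,
$$H = \log\bigl(n+\Delta_\omega u_{\beta,\e}\bigr) - A u_{\beta,\e} + B\psi_0,$$
with $A,B$ chosen in terms of bisectional curvature bounds for $\omega$ and of the coefficients of $\psi_0$. The crucial role of the weight $B\psi_0$ is that because $\psi_0\to-\infty$ along its polar set, the supremum of $H$ is attained on the ample locus, where all computations are legitimate; the classical Aubin-Yau maximum principle manipulation, using strict positivity of $\theta+dd^c\psi_0+\delta\omega$ for some $\delta>0$ to absorb the negative curvature terms, then delivers precisely the bound $|dd^c u_{\beta,\e}|_\omega\le C(|\psi_0|+1)^2e^{B|\psi_0|}$ uniform in $\beta$ and $\e$, which is stable under the monotone limit. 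This Hessian estimate is by far the main obstacle.

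For the Monge-Amp\`ere identity, the Laplacian bound places $\phi$ in $W^{2,\infty}_{\mathrm{loc}}(\Amp(\alpha))$. On the open set $\{\phi<0\}\cap\Amp(\alpha)$, a local balayage argument gives $(\theta+dd^c\phi)^n=0$: if there were positive mass in a small ball $B$, one could solve the Dirichlet problem on $B$ to push $\phi$ strictly upward there without violating $\phi\le 0$, contradicting maximality in the definition of $\phi$. On $D\cap\Amp(\alpha)$, at every Lebesgue density point the function $\phi$ attains its upper bound $0$; the standard fact that a $W^{2,\infty}$ function achieving its maximum on a set of positive density has vanishing Hessian at almost every such point forces $dd^c\phi=0$ a.e.\ on $D$, so $(\theta+dd^c\phi)^n=\theta^n$ a.e.\ there. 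Since $E_{nK}(\alpha)$ has Lebesgue measure zero, combining the two contributions gives $\MA_\theta(\phi)=\mathbbm{1}_D\theta^n$, and integrating together with Theorem \ref{BEGZthm} yields the asserted volume identity.
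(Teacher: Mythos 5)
This statement is not proved in the paper at all: it is quoted verbatim from Berman--Demailly \cite{BD}, whose argument runs through Demailly's regularization of quasi-psh functions via the exponential map of a Chern connection, together with a careful tracking of how the Laplacian degenerates along the poles of $\psi_0$. Your strategy is instead the ``zero temperature'' Monge--Amp\`ere approximation $(\theta+dd^cu_\b)^n=e^{\b u_\b}\mu$, which is precisely Berman's alternative proof in \cite{Ber13}; the paper's own remark after the theorem points out that this route covers the case where $\a$ is K\"ahler (with $\psi_0$ smooth), and that this is the only case actually used in the proof of Proposition \ref{propnewest}. So for the paper's purposes your approach is a legitimate, genuinely different route, and your treatment of the soft parts is fine: the identification $\phi=\phi^*$ and the minimal-singularity claim are correct, the balayage argument on $\{\phi<0\}$ is standard, and the vanishing of $dd^c\phi$ a.e.\ on $D$ is the usual coincidence-set fact (better phrased as: the second derivatives of a $W^{2,1}$ function agree a.e.\ on the contact set with those of the obstacle, here $0$, rather than via density points of a maximum).

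For the theorem as actually stated (general big $\a$), however, your scheme has a real gap at its core step. If $[\theta]$ is merely big, $\theta+\e\omega$ is not K\"ahler for small $\e>0$, so Aubin--Yau does not apply; taking $\e$ large enough to make $\theta+\e\omega$ K\"ahler changes the cohomology class, and in the limit $\e\to 0$ the solutions develop singularities along $E_{nK}(\a)$, so they are no longer smooth and the pointwise maximum-principle computation on $H=\log(n+\Delta_\omega u)-Au+B\psi_0$ cannot be carried out as written (the BEGZ-type solutions in a big class have only minimal singularities, and their Laplacians are not classical functions on all of $X$). This is exactly why Berman's proof is confined to the K\"ahler case and why Berman--Demailly resort to a different regularization technique for big classes. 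Secondary, fixable issues: the double limit in $\b$ and $\e$ is not obviously monotone and its identification with $\phi$ needs the quantitative two-sided comparison $\phi-C\b^{-1}\log\b\le u_\b\le\phi+C\b^{-1}$ rather than an appeal to monotone convergence; and the specific shape $C(|\psi_0|+1)^2e^{B|\psi_0|}$ would not come out of your test function (you would get $Ce^{Au+B|\psi_0|}$), though only local boundedness of $dd^c\phi$ on $\Amp(\a)$ is ever used. If you restrict to $\a$ K\"ahler and flesh out the a priori estimate and the limit identification, your argument is a correct and self-contained substitute for the citation in the only case the paper needs.
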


\begin{remark}
We will actually only need the case when $\alpha$ is K\"ahler (then $\psi_0$ can be chosen to be smooth). This simpler case was given an alternative proof by Berman in \cite{Ber13}.
\end{remark}

If $f$ is a smooth function we can also consider the envelope $$\phi_f:=\sup\{\psi\leq f: \psi\in PSH(X,\theta)\}.$$ It is easy to see that $$\phi_f-f=\sup\{\psi\leq 0: \psi\in PSH(X,\theta+dd^cf)\},$$ and thus by Theorem \ref{BDThm} $$\vol(\alpha)=\int_X \MA_{\theta}(\phi_f)=\int_{D_f}\theta^n,$$ where $D_f:=\{\phi_f=f\}$.

\section{Proof of Proposition \ref{propnewest}}

\begin{proof}
Recall that we need to show that if $\alpha,\beta\in H^{1,1}(X,\mathbb{R})$ are two nef classes on a projective manifold $X$, and also $\beta\in NS_{\mathbb{R}}(X)$, then we have that 
\begin{equation} \label{weakeqproof}
\vol(\alpha-\beta)\geq (\alpha^n)-\sum_{k=1}^n \binom{n}{k}(\alpha^{n-k}\cdot \beta^k).
\end{equation}

We can assume that $$(\alpha^n)-\sum_{k=1}^n \binom{n}{k}(\alpha^{n-k}\cdot \beta^k)>0,$$ because otherwise the inequality (\ref{weakeqproof}) would be trivially fulfilled.

Assume that we can show the inequality (\ref{weakeqproof}) under the additional assumption that $\alpha-\beta$ is big. Then for general $\alpha$ and $beta$ we can let $t_0:=\sup\{t\in [0,1]: \alpha-t\beta \textrm{ is big}\}$. By the continuity and monotonicity of the volume we get that 
\begin{eqnarray*}
\vol(\alpha-t_0\beta)=\lim_{t\to t_0}\vol(\alpha-t_0\beta)\geq  (\alpha^n)-\sum_{k=1}^n \binom{n}{k}t_0^k(\alpha^{n-k}\cdot \beta^k)\geq \\ \geq  (\alpha^n)-\sum_{k=1}^n \binom{n}{k}(\alpha^{n-k}\cdot \beta^k)>0.
\end{eqnarray*} 
This shows that $\alpha-t_0\beta$ is big and thus $t_0=1$, which therefore implies the desired inequality (\ref{weakeqproof}). 

Thus without loss of generality we can assume that $\alpha-\beta$ is big. By the homogeneity and continuity of the volume and the intersection numbers we can also without loss of generality assume that $\alpha$ is K\"ahler while $\beta=c_1(L)$ for some very ample holomorphic line bundle $L$ (which also means that $\beta$ is K\"ahler).

Let $\theta$ and $\omega$ be K\"ahler forms in $\alpha$ and $\beta$ respectively, and let $s$ be a nontrivial holomorphic section of $L$. There is a positive metric $h$ of $L$ whose curvature form is $\omega$, and we let $$g:=\ln|s|^2_h,$$ where we normalize $h$ so that $\max g=0$. We thus have $dd^cg=[Y]-\omega$ where $Y$ is the effective divisor defined by $s$. 

The idea of the proof is to go between $PSH(X,\theta)$ and $PSH(X,\theta-\omega)$ using the function $g$. How this works is described in the following lemma.

\begin{lemma} \label{lemtrans}
If $v\in PSH(X,\theta-\omega)$ then $v+g\in PSH(X,\theta)$. Conversely, if $u\in PSH(X,\theta)$ and $u\leq g+O(1)$ then $u-g$ extends across $Y$ to a $\theta-\omega$-psh function on $X$.
\end{lemma}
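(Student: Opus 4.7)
The proof is essentially a direct computation at the level of currents, exploiting the defining identity $dd^c g = [Y] - \omega$, combined with the standard extension theorem for plurisubharmonic functions across an analytic subset.

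For the first direction, let $v \in PSH(X,\theta-\omega)$. The plan is simply to add the currents:
$$\theta + dd^c(v+g) \;=\; \bigl((\theta-\omega) + dd^cv\bigr) \;+\; \bigl(\omega + dd^c g\bigr) \;=\; \bigl((\theta-\omega) + dd^cv\bigr) + [Y],$$
which is a sum of two positive closed $(1,1)$-currents, hence positive. What remains is to check that $v+g$ qualifies as a $\theta$-psh function in the strict sense: it is upper semicontinuous as the sum of two USC functions, and it is not identically $-\infty$ because $g \in \Lone_{\mathrm{loc}}$ (having only a logarithmic singularity along $Y$) and $v$ is $\Lone_{\mathrm{loc}}$ as a quasi-psh function.

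For the second direction, assume $u \in PSH(X,\theta)$ with $u \leq g + C$ for some constant $C$. Since $g$ is smooth on the Zariski open set $X\setminus Y$, the function $u - g$ is well defined there, and the analogous computation gives
$$(\theta-\omega) + dd^c(u-g) \;=\; (\theta + dd^c u) \;-\; (\omega + dd^c g) \;=\; (\theta + dd^c u) - [Y],$$
which equals $\theta + dd^c u \geq 0$ on $X\setminus Y$ since $[Y]$ is supported on $Y$. Hence $u-g$ is $(\theta-\omega)$-psh on $X\setminus Y$. The hypothesis $u - g \leq C$ shows that $u-g$ is locally bounded above near $Y$, so by the standard extension theorem for (quasi-)psh functions across a closed analytic subset, its upper semicontinuous regularization extends uniquely to a function $\tilde v \in PSH(X,\theta-\omega)$ agreeing with $u-g$ outside $Y$.

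I do not expect any real obstacle in this lemma: the first part is a one-line current identity, and the only mild subtlety in the second part is invoking the extension theorem correctly, i.e.\ observing that $Y$ is a genuine proper analytic subset (not all of $X$) and that $u-g$ is bounded above in a neighbourhood of every point of $Y$, which are both immediate from the setup.
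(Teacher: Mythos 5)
Your proof is correct and follows essentially the same route as the paper: the first part is the same current identity $\theta+dd^c(v+g)=((\theta-\omega)+dd^cv)+[Y]\ge 0$, and the second part is the same computation on $X\setminus Y$ followed by the standard extension theorem across the analytic set $Y$ using the upper bound $u-g\le C$. If anything, your version is slightly more careful than the paper's (which loosely says $u-g$ is ``bounded'' when only boundedness from above is available and needed), so no changes are required.
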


\begin{proof}
If $v\in PSH(X,\theta-\omega)$ then $$dd^c(v+g)+\theta=dd^cv+[Y]-\omega+\theta\geq dd^cv +(\theta-\omega)\geq 0$$ by assumption, and so $v+g\in PSH(X,\theta)$. On the other hand, if $u\in PSH(X,\theta)$ then on $X\setminus Y$, since there $dd^cg=-\omega$, we have that $$dd^c(u-g)+(\theta-\omega)=dd^cu+\theta\geq 0$$ by assumption. If also $u\leq g+O(1)$ then $u-g$ is bounded so it extends across the analytic set $Y$ as a $\theta-\omega$-psh function.
\end{proof}

Let $|\cdot|_{reg}$ be a smooth convex function on $\mathbb{R}$ which coincides with $|\cdot|$ for $|x|\geq 1$. We then let $$\max_{reg}(x,y):=\frac{x+y+|x-y|_{reg}}{2}$$ be the corresponding regularized max function. 

For any $R>0$ we define $$g_R:=\max_{reg}(g,-R).$$ Since $g\in PSH(X,\omega)$, $-R\in PSH(X,\omega)$ and $\max_{reg}$ is a convex function it follows that $g_R\in PSH(X,\omega)$. We also note that $g_R$ decreases to $g$ as $R\to \infty$.

Now let $$\phi_R:=\sup\{\psi\leq g_R: \psi\in PSH(\theta)\}$$ and $$D_R:=\{\phi_R=g_R\}.$$ Since $\phi_R$ is bounded (i.e. it has minimal singularities) it follows from Theorem \ref{BEGZthm} that 
\begin{equation} \label{voleq1}
(\alpha^n)=\vol(\alpha)=\int_X\MA_{\theta}(\phi_R).
\end{equation} 
We also get from Theorem \ref{BDThm} and the accompanying remark that 
\begin{equation} \label{MAmain}
\MA_{\theta}(\phi_R)=\mathbbm{1}_{D_R}(\theta+dd^cg_R)^n.
\end{equation}  

Since $g_R$ is decseasing we get that $\phi_R$ is decreasing and so $\phi_{\infty}:=\lim_{R\to\infty}\phi_R$ is $\theta$-psh unless it is identically equal to $-\infty$. 

\begin{lemma} \label{lemminsing}
$\phi_{\infty}$ is not  identically equal to $-\infty$ and $\phi_{\infty}-g$ is $(\theta-\omega)$-psh with minimal singularities. 
\end{lemma}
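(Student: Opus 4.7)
The plan is to bootstrap everything off of Lemma \ref{lemtrans} and the bigness assumption on $\alpha-\beta$, which is what guarantees that the relevant space $PSH(X,\theta-\omega)$ is not only nonempty but contains functions with minimal singularities.

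First I would show that $\phi_\infty\not\equiv -\infty$. Since $\alpha-\beta$ is big, we can fix some $v\in PSH(X,\theta-\omega)$ with minimal singularities, normalized so that $v\le 0$. By Lemma \ref{lemtrans}, $v+g\in PSH(X,\theta)$. Because $g_R\ge g\ge v+g$ pointwise, the function $v+g$ is a candidate in the sup defining $\phi_R$, so $v+g\le \phi_R$ for every $R$. Passing to the limit gives $v+g\le \phi_\infty$, so $\phi_\infty$ is not identically $-\infty$. Since $\phi_\infty$ is a decreasing limit of $\theta$-psh functions that does not vanish identically, it is itself $\theta$-psh.

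Next I would show that $\phi_\infty-g$ extends to a $(\theta-\omega)$-psh function on $X$. The inequality $\phi_R\le g_R$ and the fact that $g_R\searrow g$ give $\phi_\infty\le g$ on $X$; in particular $\phi_\infty\le g+O(1)$. Lemma \ref{lemtrans} then produces the desired extension of $\phi_\infty-g$ across $Y$ as an element of $PSH(X,\theta-\omega)$, which I will still denote by $\phi_\infty-g$.

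Finally, for the minimal singularities statement, let $w\in PSH(X,\theta-\omega)$ be arbitrary. Since $X$ is compact and $w$ is upper semicontinuous, $M:=\sup_X w<\infty$, and $w-M\le 0$. By Lemma \ref{lemtrans}, $(w-M)+g\in PSH(X,\theta)$, and $(w-M)+g\le g\le g_R$, so $(w-M)+g\le \phi_R$ for every $R$. Passing to the limit yields $w\le \phi_\infty-g+M$ on $X\setminus Y$, and since both sides are $(\theta-\omega)$-psh functions on $X$ (after the extension of the previous step), the inequality holds on all of $X$. Thus $\phi_\infty-g$ has minimal singularities among $(\theta-\omega)$-psh functions.

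The only subtlety I anticipate is making sure that the envelope $\phi_R$ really sees the candidate $v+g$ and that the extension of $\phi_\infty-g$ across the analytic set $Y$ is precisely the one compared against a general $w$ in the minimality step; both issues are handled transparently by Lemma \ref{lemtrans} and the upper semicontinuity of $(\theta-\omega)$-psh functions (so there is no genuinely hard step here beyond what is already in place).
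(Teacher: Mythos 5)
Your proposal is correct and follows essentially the same route as the paper: produce a candidate $v+g$ below every $\phi_R$ via Lemma \ref{lemtrans} to rule out $\phi_\infty\equiv-\infty$, use $\phi_\infty\le g$ and the second half of Lemma \ref{lemtrans} to extend $\phi_\infty-g$ across $Y$, and deduce minimal singularities from the lower bound. The only cosmetic difference is that you compare against an arbitrary $w\in PSH(X,\theta-\omega)$ directly, whereas the paper compares against the fixed $v$ with minimal singularities and concludes by transitivity.
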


\begin{proof}
Let $v\in PSH(X,\theta-\omega)$ have minimal singularities, and by adding some constant we can assume that $v\leq 0$. From Lemma \ref{lemtrans} we see that $v+g\in PSH(X,\theta)$. Note that $v+g\leq g\leq g_R$ for all $R$. It follows that $v+g\leq \phi_R$ for all $R$ and hence $v+g\leq \phi_{\infty}$. It follows that $\phi_{\infty}$ is not  identically equal to $-\infty$.  By the second part of Lemma \ref{lemtrans} we get that $\phi_{\infty}-g$ lies in $PSH(X,\theta-\omega)$ and since $\phi_{\infty}-g\geq v$ where $v$ had minimal singularities we see that $\phi_{\infty}-g$ also has minimal singularities.
\end{proof}

It now follows from Theorem \ref{BEGZthm} that 
\begin{equation} \label{voleq}
\vol(\alpha-\beta)=\int_X \MA_{\theta-\omega}(\phi_{\infty}-g).
\end{equation} 

Note that away from $Y$ we have that $dd^cg=-\omega$. It follows that 
\begin{eqnarray*}
\mathbbm{1}_{X\setminus Y}\MA_{\theta-\omega}(\phi_{\infty}-tg)=\mathbbm{1}_{\Amp(\a-\b)\setminus Y}(dd^c(\phi_{\infty}-g)+\theta-\omega)^n=\\=\mathbbm{1}_{\Amp(\a-\b)\setminus Y}(dd^c\phi_{\infty}+\theta)^n=\mathbbm{1}_{\Amp(\a-\b)\setminus Y}\MA_{\theta}(\phi_{\infty}).
\end{eqnarray*} 
Since Monge-Amp\`ere measures put no mass on proper analytic subsets such as $E_{nK}(\alpha)\cup Y$ this shows that $$\MA_{\theta-\omega}(\phi_{\infty}-g)=\MA_{\theta}(\phi_{\infty}).$$ Combined with (\ref{voleq}) it means that $$\vol(\alpha-\beta)=\int_X \MA_{\theta}(\phi_{\infty}).$$

By Lemma \ref{lemminsing} $\phi_{\infty}$ is locally bounded $\Amp(\a-\b)\setminus Y$. By Theorem \ref{BTthm} the measures $\MA_{\theta}(\phi_R)$ converge weakly to $\MA_{\theta}(\phi_R)$ on $\Amp(\a-\b)\setminus Y$.  Picking a large open set $U$ such that $\overline{U}\subseteq (\Amp(\a-\b)\setminus Y)$ we therefore get
\begin{eqnarray*}
\vol(\alpha-\beta)=\int_X \MA_{\theta}(\phi_{\infty})\geq \int_{\overline{U}}\MA_{\theta}(\phi_{\infty})\geq \lim_{R\to\infty}\int_U \MA_{\theta}(\phi_R).
\end{eqnarray*} 
Combined with (\ref{voleq1}) and (\ref{MAmain}) this yields 
\begin{equation} \label{finaleq1}
\vol(\alpha-\beta)\geq (\alpha^n)-\lim_{R\to\infty}\int_{U^c} \MA_{\theta}(\phi_R)=(\alpha^n)-\lim_{R\to\infty}\int_{D_R\cap U^c}(\theta+dd^cg_R)^n.
\end{equation}

Recall that for all $R,$ $g_R\in PSH(X,\omega)$. We thus get the estimate $$\mathbbm{1}_{D_R}(\theta+dd^cg_R)^n\leq (\theta+(dd^cg_R+\omega))^n=\sum_{k=0}^n \binom{n}{k}\theta^{n-k}\wedge (dd^cg_R+\omega)^k,$$ where each term on the right hand side is a positive measure. It follows that 
\begin{eqnarray} \label{finaleq2}
\int_{D_R\cap U^c}(\theta+dd^cg_R)^n\leq \int_{U^c}\theta^n+\sum_{k=1}^n\binom{n}{k}\int_X\theta^{n-k}\wedge (dd^cg_R+\omega)^k= \nonumber \\=\int_{U^c}\theta^n+\sum_{k=1}^n \binom{n}{k}(\alpha^{n-k}\cdot \beta^k).
\end{eqnarray}
Combining (\ref{finaleq1}) and (\ref{finaleq2}) thus yields $$\vol(\alpha-\beta)\geq (\alpha^n)-\int_{U^c}\theta^n-\sum_{k=1}^n \binom{n}{k}(\alpha^{n-k}\cdot \beta^k).$$ Since $\theta^n$ is a volume form it puts no mass on $E_{nK}(\alpha)\cup Y$, so by making $U$ large the term $\int_{U^c}\theta^n$ gets arbitrarily small. This then gives us the desired estimate (\ref{weakeqproof}). 

\end{proof}

\appendix

\section{Remarks on orthogonality, differentiabiltity and duality -- S. Boucksom} \label{Seb}

\subsection{Differentiability and duality in the projective case}
Demailly conjectures that the following 'transcendental Morse inequality' 
\begin{equation}\label{equ:Morse}
\vol(\a-\b)\ge(\a^n)-n(\a^{n-1}\cdot\b)
\end{equation}
holds for any two nef classes $\a,\b\in H_{\mathbb{R}}^{1,1}(X)$ on a compact K\"ahler manifold $X$ of complex dimension $n$. 

In the main paper the following was proved:

\begin{prop}\label{thm:david}
Let $X$ be projective, $\alpha,\beta\in H^{1,1}(X,\mathbb{R})$ two nef classes where $\beta\in NS_{\mathbb{R}}(X)$. Then we have that $$\vol(\alpha-\beta)\geq (\alpha^n)-\sum_{k=1}^n \binom{n}{k}(\alpha^{n-k}\cdot \beta^k).$$
\end{prop}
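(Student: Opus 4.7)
The plan is to work entirely in the projective, big case and to use the singularity of a section of a very ample line bundle in $\beta$ to transfer information between $PSH(X,\theta)$ and $PSH(X,\theta-\omega)$. First I would perform standard reductions: by continuity and homogeneity of the volume and of intersection numbers, I may assume $\alpha$ is a K\"ahler class and $\beta=c_1(L)$ for a very ample line bundle $L$; in particular $\beta$ is also K\"ahler. Next, I would reduce to the case where $\alpha-\beta$ is big: if the estimate is known under this assumption, then for general $\alpha,\beta$ with right-hand side positive one sets $t_0:=\sup\{t\in[0,1]:\alpha-t\beta\text{ big}\}$ and uses the continuity of the volume (Theorem \ref{Bthm}) together with the inequality applied at $t<t_0$ to see that $\vol(\alpha-t_0\beta)>0$, forcing $t_0=1$.

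With these reductions in place, pick K\"ahler forms $\theta\in\alpha$, $\omega\in\beta$, a section $s\in H^0(X,L)$ with divisor $Y$, and a positive metric $h$ on $L$ with curvature $\omega$, normalized so that $g:=\log|s|^2_h\le 0$ and $dd^c g=[Y]-\omega$. The key elementary observation is the translation lemma: $v\in PSH(X,\theta-\omega)$ if and only if $v+g\in PSH(X,\theta)$ with $v+g\le g+O(1)$. To exploit Berman--Demailly regularity I would smooth $g$ into $g_R:=\max_{\mathrm{reg}}(g,-R)$, which is still $\omega$-psh and decreases to $g$, and define the envelope
\[
\phi_R:=\sup\{\psi\in PSH(X,\theta):\psi\le g_R\},\qquad D_R:=\{\phi_R=g_R\}.
\]
Since $\alpha$ is K\"ahler and $\phi_R$ is bounded, Theorem \ref{BDThm} gives $\MA_\theta(\phi_R)=\mathbbm{1}_{D_R}(\theta+dd^c g_R)^n$ and $\int_X\MA_\theta(\phi_R)=(\alpha^n)$.

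Now let $\phi_\infty:=\lim_R\phi_R$. Using the translation lemma applied to a $(\theta-\omega)$-psh function $v$ with minimal singularities normalized by $v\le 0$ (which exists since $\alpha-\beta$ is big), I would show that $v+g\le\phi_R$ for all $R$, hence $\phi_\infty\not\equiv-\infty$, and then that $\phi_\infty-g$ extends to a $(\theta-\omega)$-psh function with minimal singularities. Consequently Theorem \ref{BEGZthm} gives $\vol(\alpha-\beta)=\int_X\MA_{\theta-\omega}(\phi_\infty-g)$, and since $dd^c g=-\omega$ off $Y$ while Monge--Amp\`ere measures do not charge the proper analytic set $E_{nK}(\alpha-\beta)\cup Y$, this equals $\int_X\MA_\theta(\phi_\infty)$. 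Choosing a large open $U$ with $\overline U\subset \Amp(\alpha-\beta)\setminus Y$, the Bedford--Taylor convergence Theorem \ref{BTthm} applied to the decreasing sequence $\phi_R\searrow\phi_\infty$ on $\Amp(\alpha-\beta)\setminus Y$ yields
\[
\vol(\alpha-\beta)\ge\lim_{R\to\infty}\int_U\MA_\theta(\phi_R)=(\alpha^n)-\lim_{R\to\infty}\int_{D_R\cap U^c}(\theta+dd^c g_R)^n.
\]

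The final and really decisive step is the pointwise estimate on $(\theta+dd^c g_R)^n$. Since $g_R\in PSH(X,\omega)$, the form $\eta_R:=dd^c g_R+\omega$ is positive, so
\[
\mathbbm{1}_{D_R}(\theta+dd^c g_R)^n\le(\theta+\eta_R)^n=\sum_{k=0}^n\binom{n}{k}\theta^{n-k}\wedge\eta_R^k,
\]
and integrating term by term, using $[\eta_R]=\beta$, gives
\[
\int_{D_R\cap U^c}(\theta+dd^c g_R)^n\le\int_{U^c}\theta^n+\sum_{k=1}^n\binom{n}{k}(\alpha^{n-k}\cdot\beta^k).
\]
Since $\theta^n$ is a smooth volume form, $\int_{U^c}\theta^n$ can be made arbitrarily small by enlarging $U$, yielding the claimed inequality. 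The main obstacle in this argument is the regularity result of Berman--Demailly (Theorem \ref{BDThm}) that guarantees the precise form $\MA_\theta(\phi_R)=\mathbbm{1}_{D_R}(\theta+dd^c g_R)^n$: without it the key trick of bounding the Monge--Amp\`ere measure of the envelope by $(\theta+\eta_R)^n$ would not be available, and the delicate book-keeping that identifies $\vol(\alpha-\beta)$ with $\int_X\MA_\theta(\phi_\infty)$ would not close.
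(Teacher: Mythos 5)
Your proposal is correct and follows essentially the same route as the paper's own proof: the same reductions (to $\alpha$ K\"ahler, $\beta=c_1(L)$ very ample, $\alpha-\beta$ big), the same translation lemma via $g=\log|s|^2_h$, the same regularized envelopes $\phi_R$ with the Berman--Demailly formula $\MA_\theta(\phi_R)=\mathbbm{1}_{D_R}(\theta+dd^cg_R)^n$, and the same final binomial expansion of $(\theta+dd^cg_R+\omega)^n$. No substantive differences to report.
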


As we shall see, this result implies the following general statements. 

\begin{thm}\label{thm:proj} Let $X$ be a projective manifold. 
\begin{itemize}
\item[(i)] The Morse inequality (\ref{equ:Morse}) holds for arbitrary nef $(1,1)$-classes. 
\item[(ii)] The differentiability theorem of \cite{BFJ} holds for all $(1,1)$-classes: for each $\a,\g\in H_{\R}^{1,1}(X)$ with $\a$ big, we have 
$$
\frac{d}{dt}\bigg|_{t=0}\vol(\a+t\g)=n\,\g\cdot\langle\a^{n-1}\rangle.
$$
\item[(iii)] The cones $\mathcal{E}$ and $\mathcal{M}$ are dual. 
\end{itemize}
\end{thm}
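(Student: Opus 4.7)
The plan is to establish the three parts in the order (ii), (i), (iii) from Proposition~\ref{thm:david}: differentiability first, then the Morse inequality as a consequence, and finally the duality of cones via the BDPP argument.

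For (ii), I would follow the Boucksom--Favre--Jonsson strategy. Rescaling $\beta\mapsto t\beta$ in Proposition~\ref{thm:david} yields
$$
\vol(\alpha-t\beta)\geq(\alpha^n)-nt(\alpha^{n-1}\cdot\beta)+O(t^2)
$$
for $\alpha$ K\"ahler and $\beta$ nef in $NS_{\mathbb{R}}(X)$, which gives a one-sided lower bound on the directional derivative of $\vol$ at $\alpha$ in direction $-\beta$. Combined with the concavity of $\vol^{1/n}$ on the big cone, this yields the orthogonality relation $\vol(\alpha)=\langle\alpha^{n-1}\rangle\cdot\alpha$ for $\alpha$ big in $NS_{\mathbb{R}}(X)$, essentially reproducing the BFJ argument with Proposition~\ref{thm:david} replacing the Morse inequality; orthogonality together with concavity then yields the BFJ differentiability formula on the big cone of $NS_{\mathbb{R}}(X)$. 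The passage to $H^{1,1}_{\mathbb{R}}(X)$ exploits the continuity of $\langle\alpha^{n-1}\rangle$ on the big cone and the global concavity of $\vol^{1/n}$: for any big $\alpha$ and any $\gamma\in H^{1,1}_{\mathbb{R}}(X)$, one estimates both one-sided derivatives of $\vol$ in direction $\gamma$ by perturbing through classes in $NS_{\mathbb{R}}$ and uses concavity to force agreement with $n\langle\alpha^{n-1}\rangle\cdot\gamma$.

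For (i), I would integrate (ii) along a straight segment. Set $h(t):=\vol(\alpha-t\beta)$ for $t\in[0,t_0]$ where $t_0:=\sup\{t\geq 0:\alpha-t\beta\text{ is big}\}$. By (ii), $h'(t)=-n\langle(\alpha-t\beta)^{n-1}\rangle\cdot\beta$ for $t\in[0,t_0)$. The crucial observation is the monotonicity of positive intersections: if $\gamma\in\mathcal{E}^{\circ}$ with $\alpha-\gamma$ pseudoeffective and $\beta$ nef, then $\langle\gamma^{n-1}\rangle\cdot\beta\leq\langle\alpha^{n-1}\rangle\cdot\beta$, which reduces to $(\alpha^{n-1}\cdot\beta)$ when $\alpha$ is nef and big. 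Integration gives $h(t_0)\geq(\alpha^n)-nt_0(\alpha^{n-1}\cdot\beta)$. If $t_0\geq 1$, this is (\ref{equ:Morse}). If $t_0<1$, then $h(t_0)=0$ forces $(\alpha^n)<n(\alpha^{n-1}\cdot\beta)$, so the right-hand side of (\ref{equ:Morse}) is negative and the inequality is trivial.

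For (iii), I would invoke the BDPP duality argument, now available in the full transcendental setting thanks to (i). The easy inclusion that pseudoeffective classes pair non-negatively with movable classes follows from push-pull invariance of positive currents. For the converse, suppose $\alpha\cdot\mu\geq 0$ for every $\mu\in\mathcal{M}$ but $\alpha\notin\mathcal{E}$. Fix a K\"ahler class $\omega$ and let $s:=\inf\{t\geq 0:\alpha+t\omega\in\mathcal{E}\}$, so that $\alpha+s\omega$ lies on the boundary of $\mathcal{E}$. One then uses (i) applied to suitable nef pairs derived from $\alpha+(s+\epsilon)\omega$ to produce a movable class $\mu_0$ with $\alpha\cdot\mu_0<0$, contradicting the hypothesis.

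The main obstacle I anticipate is the passage from $NS_{\mathbb{R}}(X)$ to $H^{1,1}_{\mathbb{R}}(X)$ in (ii), since these spaces can differ by a genuine transcendental part. The extension must exploit global concavity of $\vol^{1/n}$ and continuity of the positive intersection $\langle\alpha^{n-1}\rangle$, rather than a naive density argument.
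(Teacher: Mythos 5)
Your overall architecture --- derive orthogonality/differentiability from Proposition~\ref{thm:david}, deduce the Morse inequality by integrating the derivative formula along the segment from $\alpha$ to $\alpha-\beta$ using monotonicity of positive products, and then run the BDPP-type duality argument --- matches the paper's: the integration step, the equivalence of orthogonality with differentiability, and the deduction of duality are exactly the content of Theorem~\ref{thm:equiv} in the appendix. The gap is in the one step you yourself flag as the main obstacle: the passage from $NS_{\mathbb{R}}(X)$ to $H^{1,1}_{\mathbb{R}}(X)$ in (ii). Your plan to ``estimate both one-sided derivatives by perturbing through classes in $NS_{\mathbb{R}}$ and use concavity'' cannot work as stated. $NS_{\mathbb{R}}(X)$ is a proper closed linear subspace of $H^{1,1}_{\mathbb{R}}(X)$ in general, so a transcendental big class $\alpha$ is not a limit of N\'eron--Severi classes; whatever you know about $\vol$ restricted to $NS_{\mathbb{R}}(X)$ concerns a subspace that does not contain $\alpha$, and concavity of $\vol^{1/n}$ along segments joining $\alpha$ to points of $NS_{\mathbb{R}}(X)$ produces one-sided bounds involving positive products of those N\'eron--Severi classes, not the quantity $n\,\gamma\cdot\langle\alpha^{n-1}\rangle$ attached to $\alpha$ itself. (Your first step is also slightly off: a one-sided derivative bound at K\"ahler classes plus concavity does not by itself yield orthogonality at big non-nef classes of $NS_{\mathbb{R}}(X)$; the BFJ/BDPP proof of orthogonality goes through approximate Zariski decompositions, not through derivative estimates at K\"ahler points.)

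The paper's resolution is different and is the actual content of the proof of Theorem~\ref{thm:proj}: one proves the orthogonality relation $\vol(\alpha)=\alpha\cdot\langle\alpha^{n-1}\rangle$ \emph{directly for an arbitrary, possibly transcendental, big class} $\alpha$, and then invokes Theorem~\ref{thm:equiv}. The key observation is that in an approximate Zariski decomposition $\mu_j^*\alpha=\alpha_j+E_j$ the error $E_j$ is always the class of an effective $\mathbb{Q}$-divisor, hence rational even when $\alpha$ is not; choosing an ample $H$ with $H-\alpha$ nef, the class $\mu_j^*H-E_j=\mu_j^*(H-\alpha)+\alpha_j$ is nef \emph{and rational}, so Proposition~\ref{thm:david} --- whose hypothesis requires only the second class to lie in $NS_{\mathbb{R}}$ --- applies to the pair $\bigl(\alpha_j+t\mu_j^*H,\ t(\mu_j^*H-E_j)\bigr)$, with all the transcendental data absorbed into the first argument. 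Expanding in $t$ and optimizing gives $(\alpha_j^{n-1}\cdot E_j)^2\le\tfrac{4C}{n^2}\bigl(\vol(\alpha)-(\alpha_j^n)\bigr)\to 0$, which is the asymptotic orthogonality $\lim_j(\alpha_j^{n-1}\cdot E_j)=0$. Without this device, or an equivalent way of feeding a rational class into the second slot of Proposition~\ref{thm:david} while keeping the transcendental data in the first, your outline does not close.
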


\subsection{From orthogonality to differentiability}

As we next show, the orthogonality property of \cite{BDPP} is equivalent to the differentiability property of \cite{BFJ}. Our argument is inspired by the simplified proof of \cite[Theorem B]{BB} provided in \cite[Lemma 6.13]{LN} (see also \cite[Proposition 1.1]{X2} for a related result). 

\begin{thm}\label{thm:equiv} For a given compact K\"ahler manifold $X$, the following properties are equivalent: 
\begin{itemize}
\item[(i)] Orthogonality: each big class $\a\in H_{\R}^{1,1}(X)$ satisfies
$$
\vol(\a)=\a\cdot\langle\a^{n-1}\rangle.
$$ 
\item[(ii)] Differentiability: for each $\a,\g\in H_{\R}^{1,1}(X)$ with $\a$ big, we have 
\begin{equation} \label{equ:diff1}
\frac{d}{dt}\bigg|_{t=0}\vol(\a+t\g)=n\,\g\cdot\langle\a^{n-1}\rangle.
\end{equation}
\end{itemize}
Further, these properties imply the transcendental Morse inequality (\ref{equ:Morse}) for all nef classes, as well as the duality between $\mathcal{E}$ and $\mathcal{M}$. 
\end{thm}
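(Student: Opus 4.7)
The easy direction (ii) $\Rightarrow$ (i) is immediate from the $n$-homogeneity of the volume: setting $\g=\a$ in (ii) yields $n\vol(\a) = \tfrac{d}{dt}|_{t=0}(1+t)^n\vol(\a) = n\,\a\cdot\langle\a^{n-1}\rangle$. The content of the theorem is thus the converse, together with the two consequences.

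For (i) $\Rightarrow$ (ii), I would pinch the difference quotient between two applications of the Khovanskii--Teissier inequality for positive products from \cite{BEGZ},
$$
\b\cdot\langle\a^{n-1}\rangle \;\geq\; \vol(\a)^{(n-1)/n}\vol(\b)^{1/n}, \qquad \a,\b \text{ big.}
$$
Fix $\a$ big and $\g\in H^{1,1}_\R(X)$, and restrict $t$ to a neighborhood of $0$ where $\a+t\g$ stays big; set $L:=\g\cdot\langle\a^{n-1}\rangle$ and $\phi(t):=\langle(\a+t\g)^{n-1}\rangle$, the latter continuous in $t$ by continuity of positive products on the big cone. Applying Khovanskii--Teissier to the pair $(\a,\,\a+t\g)$ and invoking orthogonality at $\a$ gives the concavity-type upper bound
$$
\vol(\a+t\g)^{1/n} \;\leq\; \vol(\a)^{1/n} + \frac{tL}{\vol(\a)^{(n-1)/n}};
$$
applying the same inequality with the roles reversed, i.e.\ to $(\a+t\g,\,\a)$, and invoking orthogonality at the \emph{perturbed} class $\a+t\g$ gives the matching lower bound
$$
\vol(\a+t\g)^{(n-1)/n}\bigl[\vol(\a+t\g)^{1/n}-\vol(\a)^{1/n}\bigr] \;\geq\; t\,\g\cdot\phi(t).
$$
Sending $t\to 0$ with $\phi(t)\to\langle\a^{n-1}\rangle$ sandwiches the difference quotient of $\vol^{1/n}$ and gives, after the chain rule, differentiability of $\vol$ in direction $\g$ with derivative $nL$.

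For the transcendental Morse inequality (\ref{equ:Morse}), I would first assume that $\a-t\b$ stays big on $[0,1]$; (ii) then yields
$$
\vol(\a)-\vol(\a-\b) \;=\; n\int_0^1 \b\cdot\langle(\a-t\b)^{n-1}\rangle\,dt,
$$
and monotonicity of positive products on the big cone, combined with $\langle\a^{n-1}\rangle=\a^{n-1}$ (since $\a$ is nef), bounds the integrand by $\b\cdot\a^{n-1}$. The remaining case follows by passing to the maximal subinterval on which $\a-t\b$ stays big and using continuity of $\vol$. Finally, the duality $\mathcal{E}\leftrightarrow\mathcal{M}$ is extracted from Morse by the argument of \cite{BDPP}: the easy inclusion $\mathcal{E}\subset\mathcal{M}^\vee$ uses pullback of positive $(1,1)$-currents by modifications, while the nontrivial inclusion runs by contradiction, applying the Morse inequality to K\"ahler perturbations to produce a positive current in any class $\a\in\mathcal{M}^\vee$.

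The main obstacle in this plan is making the sandwich in (i) $\Rightarrow$ (ii) close: the orthogonality identity (an \emph{equality}, not merely an inequality) must be used simultaneously at the base point $\a$ and at the perturbed point $\a+t\g$, with the two Khovanskii--Teissier inequalities pointing in opposite directions, so that the one-sided concavity bound is promoted to a two-sided derivative.
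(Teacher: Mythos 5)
Your proof is correct and, at its core, coincides with the paper's: the sandwich of the difference quotient of $\vol^{1/n}$ between the concavity-type bound (\ref{equ:concave}) taken at the base point $\a$ and at the perturbed point $\a+t\g$ is exactly the content of the paper's Lemma~\ref{lem:diff}, and your integral formula for the Morse inequality and the reduction $(ii)\Rightarrow(i)$ via $\g=\a$ are verbatim the paper's. The one real difference is how you produce the two bounds: you combine orthogonality with the Khovanskii--Teissier inequality for positive products, $\b\cdot\langle\a^{n-1}\rangle\ge\vol(\a)^{1-1/n}\vol(\b)^{1/n}$, cited as known, whereas the paper rederives (\ref{equ:concave}) from scratch by passing to K\"ahler classes $\a_k\le\mu_k^*\a$, $\b_k\le\mu_k^*\b$ on modifications, applying the nef case, and letting $k\to\infty$. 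These are equivalent in content --- rearranging the paper's limiting inequality $\vol(\a)^{1/n}-\vol(\b)^{1/n}\ge(\vol(\a)-\b\cdot\langle\a^{n-1}\rangle)/\vol(\a)^{1-1/n}$ gives precisely the generalized Khovanskii--Teissier inequality you invoke --- so your route black-boxes what the paper proves inline. That inequality is indeed available for two big (not merely nef) classes, via \cite[Thm.~3.5(iii)]{BDPP} together with $\b\cdot\langle\a^{n-1}\rangle\ge\langle\b\cdot\a^{n-1}\rangle$; note that \cite{BDPP}, not \cite{BEGZ}, is the reference the paper itself uses for it. The only part you leave genuinely underdeveloped is the duality statement: rather than rerunning the contradiction argument of \cite{BDPP}, the paper gives a short direct proof --- a pseudoeffective class $\a$ in the interior of the dual of $\mathcal{M}$ satisfies $\a\cdot\langle\b^{n-1}\rangle\ge\sigma\cdot\langle\b^{n-1}\rangle$ for some K\"ahler class $\sigma$ and all big $\b$, so orthogonality applied to $\a+\e\sigma$ together with Khovanskii--Teissier yields $\vol(\a+\e\sigma)\ge(\sigma^n)>0$, and bigness follows from \cite{B1} --- which is worth writing out, since the \cite{BDPP} argument you gesture at is itself routed through orthogonality rather than directly through the Morse inequality.
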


\begin{lem}\label{lem:diff} The differentiability property (\ref{equ:diff1}) holds if and only if
\begin{equation}\label{equ:concave}
\vol(\a)^{1/n}-\vol(\b)^{1/n}\ge\frac{(\a-\b)\cdot\langle\a^{n-1}\rangle}{\vol(\a)^{1-1/n}}
\end{equation}
for any two big classes $\a,\b\in H_{\R}^{1,1}(X)$.
\end{lem}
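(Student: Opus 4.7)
The plan is to recognize the inequality (\ref{equ:concave}) as the standard tangent-hyperplane characterization of concavity at the point $\a$ for the function $f:=\vol^{1/n}$, so that both directions of the equivalence reduce to the routine interplay between directional differentiability and concavity. The essential nontrivial inputs are the Khovanskii--Teissier / Brunn--Minkowski inequality (the concavity of $\vol^{1/n}$ on the big cone) and the joint continuity on the big cone of $\vol$ and of $\a\mapsto\langle\a^{n-1}\rangle$, both recorded in \cite{BEGZ}.

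For (i)$\Rightarrow$(ii), the chain rule applied to (\ref{equ:diff1}) gives
\[
\frac{d}{dt}\bigg|_{t=0}f(\a+t\g)=\frac{\g\cdot\langle\a^{n-1}\rangle}{\vol(\a)^{1-1/n}}.
\]
The concavity of $f$ then forces its graph to lie below the tangent line at $\a$ along the segment from $\a$ to $\b$; evaluating at $\b$ yields $f(\b)-f(\a)\le (\b-\a)\cdot\langle\a^{n-1}\rangle/\vol(\a)^{1-1/n}$, which rearranges to (\ref{equ:concave}).

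For (ii)$\Rightarrow$(i), the idea is a squeeze. Fix a big $\a$ and an arbitrary $\g\in H_{\R}^{1,1}(X)$, and choose $t_0>0$ so that $\a+t\g$ stays big on $[-t_0,t_0]$. Applying (\ref{equ:concave}) first to the pair $(\a,\a+t\g)$ and then to the pair $(\a+t\g,\a)$ gives, for $0<t\le t_0$,
\[
\frac{\g\cdot\langle(\a+t\g)^{n-1}\rangle}{\vol(\a+t\g)^{1-1/n}}\le\frac{f(\a+t\g)-f(\a)}{t}\le\frac{\g\cdot\langle\a^{n-1}\rangle}{\vol(\a)^{1-1/n}}.
\]
Both outer terms tend to the same limit as $t\to 0^+$ by continuity, yielding the right directional derivative of $f$ at $\a$; running the same argument with $-\g$ in place of $\g$ supplies the matching left derivative. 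Thus $f$ is differentiable at $\a$, and multiplying through by $n\vol(\a)^{1-1/n}$ (chain rule for $\vol=f^n$) recovers the formula (\ref{equ:diff1}) for $\vol$.

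The only genuine obstacle is the last continuity step in the converse direction: absent the continuity of $\a\mapsto\langle\a^{n-1}\rangle$ on the big cone, the two bounds in the squeeze would not have an obvious common limit. Granted that input from \cite{BEGZ}, the lemma is a formal consequence of the concavity of $\vol^{1/n}$.
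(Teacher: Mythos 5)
Your proof is correct and follows essentially the same route as the paper: one direction via the tangent-hyperplane form of concavity of $\vol^{1/n}$, the other via the two-sided squeeze obtained by applying (\ref{equ:concave}) to the pairs $(\a,\a+t\g)$ and $(\a+t\g,\a)$ and invoking continuity of $\vol$ and of the positive intersection products on the big cone. The only (immaterial) difference is bibliographic: the paper credits the concavity to \cite{B1} and the continuity of positive products to \cite{BFJ} rather than \cite{BEGZ}.
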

\begin{proof} Since $\vol$ is positive on the big cone, (\ref{equ:diff1}) is equivalent to 
\begin{equation}\label{equ:diff}
\frac{d}{dt}\bigg|_{t=0}\vol(\a+t\g)^{1/n}=\frac{\g\cdot\langle\a^{n-1}\rangle}{\vol(\a)^{1-1/n}}.
\end{equation}
By concavity of $\vol^{1/n}$ on the big cone \cite{B1}, we thus see that (\ref{equ:diff1}) implies (\ref{equ:concave}). Assume conversely that the latter holds. Then 
$$
\frac{t\g\cdot\langle\a^{n-1}\rangle}{\vol(\a)^{1-1/n}}\ge\vol(\a+t\g)^{1/n}-\vol(\a)^{1/n}\ge\frac{t\g\cdot\langle(\a+t\g)^{n-1}\rangle}{\vol(\a+t\g)^{1-1/n}},
$$
for $|t|\ll 1$, which yields (\ref{equ:diff}) by continuity of positive intersection products on the big cone \cite{BFJ}. 
\end{proof}
Since $\vol(\a)=(\a^n)$ is differentiable when $\a$ is nef, the same argument shows that (\ref{equ:concave}) holds when $\a,\b$ are nef and big.

\begin{proof}[Proof of Theorem~\ref{thm:equiv}] Assume that (i) holds, and pick big classes $\a,\b\in H_{\R}^{1,1}(X)$. By Lemma~\ref{lem:diff}, it will be enough to establish (\ref{equ:concave}). By definition of positive intersection numbers, there exists a sequence of modifications $\mu_k:X_k\to X$ and K\"ahler classes $\a_k,\b_k$ on $X_k$ with
\begin{itemize}
\item $\a_k\le\mu_k^*\a$ (\ie the difference is psef); 
\item $\b_k\le\mu_k^*\b$;
\item $\vol(\a_k)\to\vol(\a)$; 
\item $\vol(\b_k)\to\vol(\b)$; 
\item $(\mu_k^*\b\cdot\a_k^{n-1})\to\b\cdot\langle\a^{n-1}\rangle$. 
\end{itemize}
As noted above, (\ref{equ:concave}) holds when the classes are nef, and hence
$$
\vol(\a_k)^{1/n}-\vol(\b_k)^{1/n}\ge\frac{(\a_k-\b_k)\cdot\a_k^{n-1}}{\vol(\a_k)^{1-1/n}}
$$
$$
\ge\frac{(\a_k^n)-(\mu_k^*\b\cdot\a_k^{n-1})}{\vol(\a_k)^{1-1/n}}
$$
since $\b_k\le\mu_k^*\b$ and $\a_k$ is nef. In the limit as $k\to\infty$ we infer
$$
\vol(\a)^{1/n}-\vol(\b)^{1/n}\ge\frac{\vol(\a)-\b\cdot\langle\a^{n-1}\rangle}{\vol(\a)^{1-1/n}},
$$
which is (\ref{equ:concave}) since $\vol(\a)=\a\cdot\langle\a^{n-1}\rangle$. This proves (i)$\Longrightarrow$(ii). Conversely, applying (ii) with $\g=\a$ yields (i), as already observed in \cite{BFJ}. 

Assume now that (i) and (ii) hold. As observed in \cite{BFJ}, the Morse inequality (\ref{equ:Morse}) holds for any two nef classes $\a,\b$, because
$$
\vol(\a-\b)-(\a^n)=-n\int_0^1\b\cdot\langle(\a-t\b)^{n-1}\rangle dt\ge-n(\b\cdot\a^{n-1})
$$
since $\a-t\b\le\a$ and $\b$ is nef. 

We next turn to the duality theorem. It is enough to show that any psef class $\a$ in the interior of the dual of $\mathcal{M}$ is big. For such a class $\a$, there exists a K\"ahler class $\sigma$ on $X$ such that 
\begin{equation}\label{equ:dual}
\a\cdot\langle\b^{n-1}\rangle\ge\sigma\cdot\langle\b^{n-1}\rangle
\end{equation}
for all big classes $\b\in H_{\R}^{1,1}(X)$. For each $\e>0$, $\a+\e\sigma$ is big, and (i) and (\ref{equ:dual}) give 
$$
\vol(\a+\e\sigma)=(\a+\e\sigma)\cdot\langle(\a+\e\sigma)^{n-1}\rangle\ge\a\cdot\langle(\a+\e\sigma)^{n-1}\rangle\ge\sigma\cdot\langle(\a+\e\sigma)^{n-1}\rangle,
$$
and hence 
$$
\vol(\a+\e\sigma)\ge(\sigma^n)^{1/n}\vol(\a+\e\sigma)^{1-1/n}
$$
by the Khovanskii-Teissier inequality (see e.g. \cite[Thm. 3.5(iii)]{BDPP}). This yields $\vol(\a+\e\sigma)\ge(\sigma^n)>0$ for any $\e>0$, which proves that $\a$ is big by \cite{B1}. 
\end{proof}

\begin{proof}[Proof of Theorem~\ref{thm:proj}] By Theorem~\ref{thm:equiv}, it is enough to show that any big class $\a\in H_{\R}^{1,1}(X)$ satisfies $\vol(\a)=\a\cdot\langle\a^{n-1}\rangle$. We do this by adapting the arguments of \cite[\S 4]{BDPP}. As above, we may choose a sequence of approximate Zariski decompositions $\mu_j^*\a=\a_k+E_j$ where $\mu_j:X_k\to X$ is a projective modification, $\a_j$ is K\"ahler, $E_j$ is (the class of) an effective $\Q$-divisor, in such a way that $\langle\a^n\rangle=\lim_{j\to\infty}(\a_j^n)$ and $\langle\a^{n-1}\rangle=\lim_{j\to\infty}(\mu_j)_*\left(\a_j^{n-1}\right)$. Property (i) is then equivalent to the asymptotic orthogonality property $\lim_{j\to \infty}\left(\a_j^{n-1}\cdot E_j\right)=0$ (hence the chosen terminology!). 

Let $H$ be an ample divisor class on $X$ such that $H-\a\in H_{\R}^{1,1}(X)$ is nef. As observed in \cite[\S 10]{BDPP}, the class 
$$
\mu_j^*H-E_j=\mu_j^*(H-\a)+\a_j
$$ 
is nef and rational. For each $t\in[0,1]$, we have
$$
\a_j+t E_j=(\a_j+t\mu_j^*H)-t(\mu_j^*H-E_j)=(\a_j+t\mu_j^*H)-t(\mu_j^*(H-\a)+\a_j)
$$
with $\a_j+t\mu_j^*H\in H_{\R}^{1,1}(X_k)$ nef, and Proposition~\ref{thm:david} therefore yields
\begin{eqnarray} \label{volestapp}
\vol(\a)\ge\vol(\a_j+t E_j)\ge \left((\a_j+t\mu_j^*H)^n\right)- \nonumber\\-nt\left((\a_j+t\mu_j^*H)^{n-1}\cdot (\mu_j^*(H-\a)+\a_j)\right)- \nonumber\\-\sum_{k=2}^n \binom{n}{k}t^k\left((\a_j+t\mu_j^*H)^{n-k}\cdot (\mu_j^*(H-\a)+\a_j)^k\right).
\end{eqnarray}

By assumption $\a_j\leq \mu_j^*H$  and thus for any $k,l,m\in \mathbb{N}$ with $k+l+m=n$ we get $$0\le \left(\alpha_j^k \cdot  (\mu_j^*H)^l \cdot (\mu_j^*(H-\a))^m\right)\leq (H^n).$$ By expanding the right hand side of (\ref{volestapp}) we thus get 
\begin{eqnarray} \label{soonfin}
\vol(\a)-(\a_j^n)\ge nt(\a_j^{n-1} \cdot \mu_j^*H)-nt(\a_j^{n-1}\cdot (\mu_j^*(H-\a)+\a_j))-Ct^2= \nonumber\\=nt(\a_j^{n-1}\cdot (\mu_j^*\a-\a_j))-Ct^2=nt(\a_j^{n-1}\cdot E_j)-Ct^2,
\end{eqnarray} 
where $C>0$ is some uniform constant.

Note that since $\a_j$ is K\"ahler $(\a_j^{n-1}\cdot E_j)\geq 0$. By assumption $\lim_{j\to\infty}(\a_j^{n-1}\cdot E_j)=(\langle \a^{n-1}\rangle \cdot \a)-\vol(\a)$, thus $(\a_j^{n-1}\cdot E_j)$ is at least bounded by some uniform constant. Without loss of generality we can assume that $(\a_k^{n-1}\cdot E_k)\leq \frac{2C}{n}$.

Setting $t=n\frac{(\a_j^{n-1}\cdot E_j)}{2C}\in [0,1]$ in (\ref{soonfin}) gives the estimate
$$
(\a_j^{n-1}\cdot E_j)^2\le \frac{4C}{n^2}\left(\vol(\a)-(\a_j^n)\right).
$$
By assumption $\lim_{j\to \infty} (\a_j^n)=\vol(\a)$, proving as desired that $\lim_{j\to \infty}(\a_j^{n-1}\cdot E_j)= 0$. 

\end{proof}

\bigskip
\noindent David Witt Nystr\"om \newline
Department of Mathematical Sciences \newline
Chalmers University of Technology and the University of Gothenburg \newline
SE-412 96 Gothenburg, Sweden \newline
wittnyst@chalmers.se, danspolitik@gmail.com

\bigskip
\noindent S\'ebastien Boucksom \newline
CNRS--CMLS\newline
 \'Ecole Polytechnique\newline
  F-91128 Palaiseau Cedex, France\newline
sebastien.boucksom@polytechnique.edu

\end{document}